\documentclass[preprint]{elsarticle}
 \oddsidemargin 0in \textwidth 6.0in \textheight 8.0in

\usepackage{amsmath}
\usepackage{amssymb}
\usepackage{amsthm}
\usepackage{graphicx}
\usepackage{epic,eepic,epsfig}
\usepackage{color}
\usepackage{subfigure}	
\usepackage{placeins}	
\usepackage{multirow}
\usepackage{epstopdf}
\usepackage{varwidth}
\usepackage[table]{xcolor}

\newtheorem{theorem}{Theorem}
\newtheorem{lemma}{Lemma}

\setlength{\tabcolsep}{8pt}

\newcommand{\revHH}[1]{\textcolor{black} {#1}}

\definecolor{tabclr}{cmyk}{0,0,1,0}

\begin{document}

\title{BDDC and FETI-DP algorithms with adaptive coarse spaces
for three-dimensional elliptic problems with oscillatory and high contrast coefficients}

\author[KHU]{Hyea Hyun Kim\corref{cor}\fnref{fn1}}
\ead{hhkim@khu.ac.kr}

\author[CUHK]{Eric Chung\fnref{fn2}}
\ead{tschung@math.cuhk.edu.hk}


\author[CUHK,XTU]{Junxian Wang\fnref{fn3}}
\ead{wangjunxian@xtu.edu.cn}

\cortext[cor]{Corresponding author}
\fntext[fn1]{The research of Hyea Hyun Kim is
supported by the National Research Foundation of Korea(NRF) grants
funded by NRF-2014R1A1A3052427 and by NRF20151009350.}
\fntext[fn2]{The research of Eric Chung is supported by Hong Kong RGC General Research Fund (Project number 400813)
and CUHK Faculty of Science Research Incentive Fund 2015-16.}
\fntext[fn3]{The research of Junxian Wang is supported by National Natural Science Foundation of China (Project number 11201398, 11301448).}
\address[KHU]{Department of Applied Mathematics and Institute of Natural Sciences, Kyung Hee University,
Korea}
\address[CUHK]{Department of Mathematics, The Chinese University of Hong Kong, Hong Kong SAR}

\address[XTU]{School of Mathematics and Computational Science, Xiangtan University, Xiangtan, Hunan 411105, China}

\begin{abstract}
BDDC and FETI-DP algorithms are developed for three-dimensional elliptic problems
with adaptively enriched coarse components.
It is known that these enriched components are necessary in the development of robust preconditioners.
To form the adaptive coarse components, carefully designed generalized eigenvalue problems
are introduced for each faces and edges, and the coarse components
are formed by using eigenvectors with their corresponding eigenvalues larger than a given tolerance
$\lambda_{TOL}$. Upper bounds for condition numbers of the preconditioned systems
are shown to be $C \lambda_{TOL}$, with the constant $C$
depending only on the maximum number of edges and faces per subdomain, and the maximum number of subdomains
sharing an edge.
Numerical results are presented to test the robustness of the proposed approach.
\end{abstract}

\begin{keyword}
BDDC, FETI-DP, projector preconditioning, parallel sum, coarse space, high contrast
\end{keyword}

\maketitle

\section{Introduction}
In this paper, we construct and analyze a class of \revHH{domain decomposition} preconditioners
for \revHH{fast solutions of} the finite element approximation of elliptic problems,
\begin{equation}\label{model:pb}
\begin{split}
-\nabla \cdot ( \rho \nabla u) &= f, \quad \text{ in } \Omega, \\
u &= 0, \quad \text{ on } \partial\Omega,
\end{split}
\end{equation}
where $\Omega$ is a bounded domain in $\mathbb{R}^d$, $d=2,3$, and $\rho(x)$ is uniformly positive and is
highly heterogeneous with very high contrast.
\revHH{In the domain decomposition methods, the domain $\Omega$ is partitioned into subdomains
and the original problem is solved iteratively by solving independent subdomain problems
and a global coarse problem at each iteration.
Such a process provides a preconditioner to the original problem.
The role of the coarse problem is important in obtaining the convergence of an iterative method
robust to the number of subdomains. We refer to \cite{TW-Book} for a general introduction to
domain decomposition preconditioners.}

\revHH{It is well known that the convergence of domain decomposition preconditioners
can be affected by the heterogeneity of $\rho(x)$ across the subdomain interfaces.
For some special cases, the standard coarse problems formed by unknowns at subdomain vertices,
edge averages, or face averages can give robust preconditioners with appropriate scaling factors~\cite{PS-FETI-multiscale1,PS-FETI-multiscale2}.
In general for a bad arrangement of coefficients, the standard coarse problems are not enough
and they can be enriched by choosing adaptive primal constraints.
One possible approach is to select the adaptive constraints from
local generalized eigenvalue problems.
The local generalized eigenvalue problems are a good indicator for the bad behavior of the standard
coarse problem and they thus can be used to select the primal constraints to enhance the
convergence of the iteration. We refer to \cite{KRR-2015} and references therein for general reviews
on the approaches in this direction.}


\revHH{In this paper, we will develop domain decomposition preconditioners with adaptively
enriched coarse problems.}
To be more specific, we will consider two types of domain decomposition techniques, namely
the BDDC (Balancing Domain Decomposition by Constraints) algorithm
and the FETI-DP (Dual-Primal Finite Element
Tearing and Interconnecting) algorithm; see \cite{BDDC-D-03,FETI-DP-Farhat,TW-Book,LW-FETIDP-BDDC}.
For these algorithms, we will develop their coarse components with an adaptively chosen set of primal unknowns,
which are robust to the coefficient variations.
\revHH{So far complete theories and numerical validations have been successfully developed for two-dimensional problems~\cite{DC-dd21-talk,Pechstein:slide:2013,Klawonn-DD21,DD22-Klawonn,Klawonn:PAMM:2014,Klawonn-TR,KimChung:2014,KRR-2015}. On the other hand, three-dimensional extension of the existing methods
with complete theory is still under development. The main contribution of the current paper is the extension of the existing methods in \cite{Pechstein:slide:2013,Klawonn:PAMM:2014,KimChung:2014,KRR-2015} to three-dimensional problems with a complete theory.} We note that in \cite{Mandel-ABDDC} an adaptive BDDC algorithm was developed and numerically studied for three-dimensional problems. More recently at the 23rd international conference on domain decomposition methods
considerable progresses on the three-dimensional problems were presented in talks by Clark Dohrmann, Axel Klawonn, and Olof Widlund.
In the talk of Olof Widlund, similar approach to ours was presented to deal with
equivalence classes sharing more than two subdomains.
His work and ours have been independently developed, being unaware of each others.
We refer to \cite{S-R-ref-2013,Dolean3} for FETI/BDD type methods and to \cite{Galvis1,Galvis2,Dolean1,Dolean2} for other variants of domain decomposition methods
with adaptively enriched coarse spaces.

Specifically, two types of preconditioning techniques are considered and analyzed in this paper.
The first method is the BDDC algorithm with a change of basis formulation
and with adaptively chosen primal unknowns by solving generalized eigenvalue problems.
The second method is
the FETI-DP algorithm with a projector preconditioner
and with adaptively chosen primal unknowns by solving the same generalized eigenvalue problems.
\revHH{The generalized eigenvalue problems are formed for each face and each edge.
The face is an equivalence class shared by two subdomains and thus the generalized eigenvalue problem
is identical to that considered for two-dimensional problems in \cite{Pechstein:slide:2013,KimChung:2014,Klawonn:PAMM:2014,KRR-2015}.
On the other hand, the edge is an equivalence class shared by more than two subdomains and thus
a different idea is required to form an appropriate generalized eigenvalue problem.}
For both methods, we will show that the condition numbers can be controlled by
$C \lambda_{TOL}$, where $\lambda_{TOL}$ is a given
tolerance used to choose the problematic eigenvectors in the generalized eigenvalue problems
and $C$ is a constant
depending only on the number of edges and faces per subdomain, and the number of
subdomains sharing an edge.
\revHH{Our generalized eigenvalue problems are based on that proposed in \cite{Klawonn:PAMM:2014}.
In that approach, the scaling matrices of FETI-DP and BDDC preconditioners come in the generalized eigenvalue problems and they can help to reduce the bad eigenvectors and result in a smaller set of
adaptive constraints. We refer to \cite{KRR-2015} for various numerical examples
and references therein for scaling matrices.}
In our numerical experiments, we also observe that the use of deluxe scalings reduces the number of
problematic eigenvectors.

\revHH{In our work, one important observation is that
in the three-dimensional experiments using a larger tolerance value
we can choose a more effective set of primal constraints on edges.
This is due to the fact that the right hand side of the generalized eigenvalue problem in \eqref{eq:Geig:edge} for the edge
underestimates the energy of a subdomain problem as more than two subdomains are
involved in the right hand side, see also \eqref{eq:matrixineq}.
We also observe that the deluxe scaling is
less sensitive to the tolerance value and keeps a good bound of condition numbers
even for a larger tolerance value.
Another important issue is computational efficiency.
Though the condition numbers can be controlled by the user-defined tolerance,
the cost for forming generalized eigenvalue problems is quite considerable especially for three-dimensional problems.
Thus a similar idea to \cite{KRR-2015}, using e-version (economic-version), can be applied to enhance the efficiency of the proposed method for the three-dimensional problems.}


This paper is organized as follows.
In Section~\ref{intro:BDDC:FETI}, a brief introduction to BDDC and FETI-DP  methods with adaptively enriched coarse problems is presented for two-dimensional elliptic problems.
In Section~\ref{ext:3D}, three-dimensional extension of these methods is carried out.
Analysis of condition numbers is provided in Section~\ref{analysis} and various numerical experiments are
presented in Section~\ref{sec:num}.

\section{\label{intro:BDDC:FETI} Adaptively enriched coarse problems in BDDC and FETI-DP}
In this section, we will give an overview of BDDC and FETI-DP methods
with the use of adaptively enriched coarse spaces.
\revHH{We first introduce a discrete form of the model problem~\eqref{model:pb}.
Let $V_h$ be the space of conforming linear finite element functions
with respect to a given mesh on $\Omega$ with mesh size $h>0$ and with the zero value on $\partial \Omega$.
We will then find the approximate solution $u\in V_h$ such that
\begin{equation}
a(u,v) = (f,v), \quad\quad \forall v\in V_h,
\label{eq:cg}
\end{equation}
where
\begin{equation}
a(u,v) = \int_{\Omega} \rho(x) \nabla u \cdot \nabla v \, dx, \quad
(f,v) = \int_{\Omega} f \, v \, dx.
\end{equation}
We assume that the domain $\Omega$ is partitioned into a set of $N$ non-overlapping subdomains $\{ \Omega_i \}$, $i=1,2,\cdots, N$,
so that $\Omega = \cup_{i=1}^N \Omega_i$.
We note that the subdomain boundaries do not cut triangles equipped for $V_h$.
We allow the coefficient $\rho(x)$ to have high contrast jumps and oscillations across subdomains
and on subdomain interfaces.}
Let $a_i(u,v)$ be the bilinear form of the model elliptic problem (\ref{eq:cg}) restricted to each subdomain $\Omega_i$ defined as
$$a_i(u,v)=\int_{\Omega_i} \rho(x) \nabla u \cdot \nabla v \, dx, \quad \forall u, v \in X_i,$$
where $X_i$ is the restriction of $V_h$ to $\Omega_i$.

\revHH{In the BDDC and FETI-DP algorithms, the original problem~\eqref{eq:cg} is solved by
an iterative method combined with a preconditioner. In the BDDC algorithm, the original problem
is reduced to a subdomain interface problem. The interface problem can be obtained by
solving a Dirichlet problem in each subdomain. After choosing dual and primal unknowns on the subdomain
interface unknowns, the interface problem is then solved by utilizing local problems and one global
coarse problem corresponding to the chosen sets of dual and primal unknowns, respectively.
At each iteration, certain scaling factors are multiplied to the residual vectors to balance
the errors across the subdomain interface regarding to the energy of each subdomain problem.
The coarse problem corrects the global part of the error in each iteration and thus the choice of primal unknowns is important in obtaining a good performance as increasing the number of subdomains.
The basis for primal unknowns is obtained by the minimum energy extension for a given constraint
at the location of primal unknowns and such a basis provides a robust coarse problem with a good
energy estimate. The FETI-DP algorithm is similar to the BDDC algorithm except that it is a dual counterpart
of the BDDC algorithm. In that algorithm, the interface problem is first decoupled at the dual unknowns
and then coupled at the primal unknowns. The continuity at the decoupled dual unknowns is enforced weakly
by Lagrange multipliers. The unknowns except the Lagrange multipliers are eliminated by solving
local problems and one global coarse problem. The resulting system on the Lagrange multipliers
is then solved by an iterative method with a preconditioner. We refer to \cite{FETI-DP-Farhat,BDDC-D-03,BDDC-Mandel-Dohrmann-Tezaur,LW-FETIDP-BDDC,TW-Book} for general
introductions to FETI-DP and BDDC algorithms.}


\subsection{\label{BDDC:FETI:theory}Notations and preliminary results}
To facilitate our discussion, we first introduce some notations.
Let $S^{(i)}$ be the Schur complement matrix obtained from the local stiffness matrix $A^{(i)}$
after eliminating unknowns interior to $\Omega_i$, where $A^{(i)}$ is defined by
$a_i(u,v)=u^T A^{(i)} v$, for all $u,v\in X_i$.
In the following we will use the same symbol to represent a finite element function and its corresponding coefficient vector
in order to simplify the notations.

Recall that $X_i$ is the restriction of the finite element space $V_h$ to each subdomain $\Omega_i$.
Let $W_i$ be the restriction of $X_i$ to $\partial \Omega_i$.
We then introduce the product spaces
$$X=\prod_{i=1}^N X_i,\quad W=\prod_{i=1}^N W_i,$$
where we remark that the functions in $X$ and $W$ are totally decoupled across the subdomain interfaces.
In addition, we introduce partially coupled subspaces $\widetilde{X}$, $\widetilde{W}$,
and fully coupled subspaces $\widehat{X}$, $\widehat{W}$,
where some primal unknowns are strongly coupled for functions in $\widetilde{X}$ or $\widetilde{W}$,
while the functions in $\widehat{X}$, $\widehat{W}$ are fully coupled across the subdomain interfaces.

Next, we present basic description of the BDDC algorithm; see~\cite{BDDC-D-03,BDDC-Mandel-Dohrmann-Tezaur,LW-FETIDP-BDDC,TW-Book}.
For simplicity, the two-dimensional case will be considered.
After eliminating unknowns interior to each subdomain, the Schur complement matrices $S^{(i)}$
are obtained from $A^{(i)}$ and they form the algebraic problem considered in the BDDC algorithm,
which is to find $\widehat{w} \in \widehat{W}$ such that
\begin{equation}\label{primal-pb}\sum_{i=1}^N R_i^T S^{(i)} R_i \widehat{w}= \sum_{i=1}^N R_i^T g_i,
\end{equation}
where $R_i: \widehat{W} \rightarrow W_i$ is the restriction operator into $\partial\Omega_i$,
and $g_i \in W_i$ depends on the source term $f$.

The BDDC preconditioner is built based on the partially coupled space $\widetilde{W}$.
Let $\widetilde{R}_i: \widetilde{W} \rightarrow W_i$ be the restriction into $\partial \Omega_i$
and let $\widetilde{S}$ be the partially coupled matrix defined by
$$\widetilde{S}=\sum_{i=1}^N \widetilde{R}_i^T S^{(i)} \widetilde{R}_i.$$
For the space $\widetilde{W}$, we can express it as the product of the two spaces
$$\widetilde{W}=W_{\Delta} \times \widehat{W}_{\Pi},$$
where $\widehat{W}_{\Pi}$ consists of vectors of the primal unknowns and
$W_{\Delta}$ consists of vectors of dual unknowns, which are strongly coupled
at the primal unknowns and decoupled at the remaining interface unknowns, respectively.
We define $\widetilde{R}: \widehat{W} \rightarrow \widetilde{W}$ such that
$$\widetilde{R}=\begin{pmatrix}
R_{\Delta} \\
R_{\Pi}
\end{pmatrix},$$
where $R_{\Delta}$ is the mapping from $\widehat{W}$ to $W_{\Delta}$ and $R_{\Pi}$ is the restriction
from $\widehat{W}$ to $\widehat{W}_{\Pi}$.
We note that $R_{\Delta}$ is obtained as
$$R_{\Delta}=\begin{pmatrix} R_{\Delta}^{(1)} \\ R_{\Delta}^{(2)}\\ \vdots \\ R_{\Delta}^{(N)}
\end{pmatrix},$$
where $R_{\Delta}^{(i)}$ is the restriction from $\widehat{W}$ to $W_{\Delta}^{(i)}$
and $W_{\Delta}^{(i)}$ is the space of dual unknowns of $\Omega_i$.

The BDDC preconditioner is then given by
\begin{equation}\label{precond:BDDC}
M^{-1}_{BDDC}=\widetilde{R}^T \widetilde{D} \widetilde{S}^{-1} \widetilde{D}^T \widetilde{R},
\end{equation}
where $\widetilde{D}$ is a scaling matrix of the form
\revHH{$$\widetilde{D}=\sum_{i=1}^N \widetilde{R}_i^T D_i \widetilde{R}_i.$$
Here the matrices  $D_i$ are defined for unknowns in $W_i$ and they
are introduced to resolve heterogeneity in $\rho(x)$ across the subdomain interface.
In more detail, $D_i$ consists of blocks $D_F^{(i)}$ and $D_V^{(i)}$, where $F$ denotes an equivalence class
shared by two subdomains, i.e., $\Omega_i$ and its neighboring subdomain $\Omega_j$, and
$V$ denotes the end points of $F$, respectively.}
We call such equivalence classes $F$ edge in two dimensions while they are called face in
three dimensions. In three dimensions, equivalence classes shared by more than two subdomains
are called edge. In both two and three dimensions, vertices are equivalence classes which are end points
of edges. We refer to \cite{klawonn2006dual} for these definitions.
\revHH{In our BDDC algorithm, unknowns at subdomain vertices are included to the set of primal unknowns
and adaptively selected primal constraints are later included to the set after a change of basis formulation.}
\revHH{For a given edge $F$ in two dimensions, the matrices $D_F^{(l)}$ and $D_V^{(l)}$
satisfy a partition of unity property, i.e., $D_F^{(i)}+D_F^{(j)}=I$ and $\sum_{l \in n(V) } D_V^{(l)}=1$,
where $n(V)$ denotes the set of subdomain indices sharing the vertex $V$. The matrices $D_F^{(l)}$ and $D_V^{(l)}$
are called scaling matrices.}
\revHH{As mentioned earlier, the scaling matrices help to balance the residual error
at each iteration with respect to the energy of subdomain problems sharing the interface.
For the case when $\rho(x)$ is identical across the interface $F$, $D_F^{(i)}$ and $D_F^{(j)}$ are chosen
simply as multiplicity scalings, i.e., $1/2$, but for a general case when $\rho(x)$ has discontinuities,
different choice of scalings, such as $\rho$-scalings or deluxe scalings, can be more effective.
The scaling matrices $D_V^{(l)}$ can be chosen using similar ideas.
We refer to \cite{KRR-2015} and references therein for scaling matrices.}

We note that by using the definitions of $\widetilde{R}$ and $\widetilde{S}$, the matrix in the left hand side of \eqref{primal-pb}
can be written as
$$\sum_{i=1}^N R_i^T S^{(i)} R_i = \widetilde{R}^T \widetilde{S} \widetilde{R}.$$
In the BDDC algorithm, the system in \eqref{primal-pb} is solved by an iterative method
with the preconditioner~\eqref{precond:BDDC}. Thus its performance is analyzed by estimating
the condition number of
\begin{equation}\label{cond:BDDC}M^{-1}_{BDDC} \widetilde{R}^T \widetilde{S} \widetilde{R}=\widetilde{R}^T \widetilde{D} \widetilde{S}^{-1} \widetilde{D}^T \widetilde{R}\widetilde{R}^T \widetilde{S} \widetilde{R}.
\end{equation}

We now present basic ideas of the FETI-DP algorithm; see \cite{FETI-DP-Farhat,LW-FETIDP-BDDC,TW-Book}.
\revHH{We confine our presentation to the case when primal unknowns at subdomain vertices
are only considered. In the FETI-DP algorithm, the adaptive set of primal constraints will be enforced by
using a projection.}
The model elliptic problem (\ref{eq:cg})
is solved in the partially coupled space $\widetilde{W}$ with the continuity
on the decoupled unknowns in $W_{\Delta}$ enforced weakly by introducing Lagrange multipliers $\lambda$.
We call the remaining decoupled unknowns dual unknowns.
Specifically, we consider the following system
\begin{align*}
\widetilde{S} \widetilde{w} + B^T \lambda&= \widetilde{g}, \\
B \widetilde{w} &=0,
\end{align*}
where $\widetilde{w} \in \widetilde{W}$, \revHH{$\widetilde{g} \in \widetilde{W}$ is the partially assembled vector
of $g_i$ at the subdomain vertices}, and $B=\begin{pmatrix} B_{\Delta} & 0 \end{pmatrix}$
with the blocks $B_{\Delta}$ and $0$ corresponding to dual unknowns and primal unknowns, respectively.
In addition,
$B_{\Delta} {w}_{\Delta} |_{F}=w_{F}^{(i)}-w_{F}^{(j)}$, for the common part $F$ of $\partial \Omega_i$ and $\partial \Omega_j$
with $w_{F}^{(i)}$ being the part of unknowns of $w_{F}^{(i)} \in W_{\Delta}^{(i)}$
interior to the edge $F$ excluding the two end points.
The matrix $B_{\Delta}$ consists of blocks $B_{\Delta}^{(i)}$,
$$B_{\Delta}=\begin{pmatrix} B_{\Delta}^{(1)} & \cdots & B_{\Delta}^{(N)} \end{pmatrix},$$
where each block corresponds to unknowns in each subdomain and the blocks consist of
entries 0, 1, or -1.
We introduce $\text{Range}(B)$ as the space of Lagrange multipliers $\lambda$. We note that when
the constraints $B \widetilde{w}=0$ are not redundant, $\text{Range}(B)$ is identical to $\mathbb{R}^{M_N}$, where $M_N$ is
the total number of constraints in $B \widetilde{w}=0$. On the other hand, when redundant constraints
are employed, $\text{Range}(B)$ is a proper subspace of $\mathbb{R}^{M_N}$, which will be the case in the three dimensions when fully redundant continuity constraints are enforced on edges.
For the two-dimensional case, equivalence classes shared by two subdomains are considered in $B \widetilde{w}=0$ after enforcing strong continuity on the unknowns at subdomain vertices and thus $\text{Range}(B)=\mathbb{R}^{M_N}$.
After eliminating $\widetilde{w}$ from the above system, the following algebraic system is obtained
\begin{equation}\label{sys:Fdp}
B \widetilde{S}^{-1} B^T \lambda = d,
\end{equation}
\revHH{where $d=B \widetilde{S}^{-1} \widetilde{g}$} and it is solved by an iterative method with the following preconditioner
\begin{equation}\label{precond:Fdp}
M^{-1}_{FETI}=B_D \widetilde{S} B_D^T ,
\end{equation}
where $B_D$ is  defined by
\begin{equation}\label{BTDs}B_D=\begin{pmatrix} B_{D,\Delta} & 0 \end{pmatrix}
=\begin{pmatrix} B_{D,\Delta}^{(1)} & \cdots & B_{D,\Delta}^{(i)} & 0 \end{pmatrix}.
\end{equation}
In the above, $B_{D,\Delta}^{(i)}$ is a scaled matrix of $B_{\Delta}^{(i)}$
where rows corresponding to Lagrange multipliers to unknowns $w_{F}^{(i)}$ are multiplied with
a scaling matrix $(D_{F}^{(j)})^T$ when $\Omega_j$ is the neighboring subdomain sharing the interface $F$ of $\partial \Omega_i$ and the Lagrange multipliers connect $w_F^{(i)}$ to $w_F^{(j)}$.
When the scaling matrices $D_F^{(l)}$ are the same as those in the BDDC algorithm,
it is well-known that FETI-DP and BDDC algorithms with the same set of primal unknowns
share the same set of spectra except zero and one; see \cite{Brenner-Sung,LW-FETIDP-BDDC}.

In the FETI-DP algorithm, \revHH{a condition number bound} is
analyzed for the following matrix,
\begin{equation}\label{cond:FETI}
M^{-1}_{FETI} B \widetilde{S}^{-1} B^T=B_D \widetilde{S} B_D^T B \widetilde{S}^{-1} B^T.
\end{equation}
Notice that the above discussions on FETI-DP/BDDC algorithms apply to the two-dimensional case.
The three-dimensional extension will be presented in Section~\ref{3d-constraints-weights}.

To conclude this section, we will state the main inequality
which is essential in our analysis of condition number bounds.
Let
$$G_p=\widetilde{R}\widetilde{R}^T \widetilde{D} \widetilde{S}^{-1} \widetilde{D}^T
\widetilde{R} \widetilde{R}^T \widetilde{S}$$
and
$$G_d=B^T B_D \widetilde{S} B_D^T B \widetilde{S}^{-1}.$$
An estimate of the condition number for the BDDC algorithm can be done using the matrix $G_p$,
since $G_p$ and $M^{-1}_{BDDC} \widetilde{R}^T \widetilde{S} \widetilde{R}$ share
the same set of non-zero eigenvalues.
Similarly, an estimate of the condition number for the FETI-DP algorithm can be done using the matrix $G_d$,
since $G_d$ and $M^{-1}_{FETI} B \widetilde{S}^{-1} B^T$ share
the same set of non-zero eigenvalues.
Let
$$E_D=\widetilde{R} \widetilde{R}^T \widetilde{D},\quad P_D=B_D^T B.$$
Then $G_p$ and $G_d$ can be written as
$$G_p=E_D \widetilde{S}^{-1} E_D^T \widetilde{S},
\quad G_d=P_D^T \widetilde{S}  P_D \widetilde{S}^{-1}.$$
Note that $E_D$ and $P_D$ satisfy
$$E_D+P_D=I.$$
Hence, $G_p$ and $G_d$ share the same set of eigenvalues except zero and one.
In addition, for $G_d$, it is known that all the nonzero eigenvalues are bounded below by one.
In conclusion, in the analysis of condition numbers of both the BDDC and the FETI-DP algorithms,
we only need to estimate an upper bound of
$G_d$.
Thus, we need to prove the following inequality
\begin{equation}\label{FETI-P} \langle \widetilde{S} P_D  \widetilde{w}, P_D \widetilde{w} \rangle
\le C \langle \widetilde{S} \widetilde{w}, \widetilde{w} \rangle.\end{equation}
\revHH{We note that the approach to reduce the condition number estimate to the above estimate
of the $P_D$ was first used in \cite{klawonn2006dual}.}
For more details regarding the above theories, we refer to \cite{Brenner-Sung,LW-FETIDP-BDDC}.

\revHH{On the other hand, when adaptive primal unknowns are introduced for the unknowns in $F$
after the change of basis formulation
the identity $E_D + P_D=I$ in the above does not hold in general.
Thus the two algorithms after the change of basis formulation do not satisfy the properties in
\cite{Brenner-Sung,LW-FETIDP-BDDC}.
For the adaptive primal constraints, we consider the BDDC algorithm
after a change of basis formulation and the FETI-DP algorithm with a projector preconditioning.
In the FETI-DP algorithm with a projector preconditioning, the primal constraints at subdomain vertices are enforced strongly while the adaptive primal constraints are enforced by using a projection.
For each method, we will provide estimate of condition numbers.
For the BDDC algorithm, we will need to estimate the following
inequality
$$ \langle \widetilde{S} (I-E_D ) \widetilde{w}, (I-E_D) \widetilde{w} \rangle
\le C \langle \widetilde{S} \widetilde{w}, \widetilde{w} \rangle,$$
while for the FETI-DP algorithm we will need to show the estimate in \eqref{FETI-P} for $\widetilde{w}$ continuous at the subdomain vertices
and satisfying the adaptive constraints, which are enforced by the projection.}


\subsection{Generalized eigenvalue problems}
In this section,
we review previous studies on adaptive enrichment of coarse components
by solving generalized eigenvalue problems.
We first give a review of the results in \cite{Pechstein:slide:2013,Klawonn:PAMM:2014,KimChung:2014} for the two-dimensional case.
We will extend the method in \cite{Klawonn:PAMM:2014} to the three-dimensional case, which will be presented in the next section.
For the two-dimensional case, an equivalence class shared by two subdomains
$\Omega_i$ and $\Omega_j$ is considered and denoted by $F$.
We note that it is identical to the face in three dimensions.
For such an equivalence class $F$, the following generalized eigenvalue problem is proposed in \cite{Klawonn:PAMM:2014}:
\begin{equation}\label{Klawonn:EIG}
\Big((D_F^{(j)})^TS_F^{(i)}D_F^{(j)} + (D_F^{(i)})^T S_F^{(j)} D_F^{(i)} \Big) v
=\lambda \, \Big( \widetilde{S}_F^{(i)}:\widetilde{S}_F^{(j)} \Big) v,
\end{equation}
where $S_F^{(i)}$ and $D_F^{(i)}$ are the block matrices of $S^{(i)}$ and $D_i$ corresponding to unknowns
interior to $F$, respectively.
The matrix $\widetilde{S}_F^{(i)}$ are the Schur complement
of $S^{(i)}$ obtained after eliminating unknowns except those interior to $F$. In addition, for symmetric and semi-positive definite matrices $A$ and $B$,
their parallel sum $A:B$ is defined by, see \cite{Parallel:Sum},
\begin{equation}
A:B=B(A+B)^{+}A, \label{eq:psum}
\end{equation}
where $(A+B)^{+}$ is a pseudo inverse of $A+B$.
We note that the problem in \eqref{Klawonn:EIG} is identical
to that considered in \cite{Pechstein:slide:2013} when $D_F^{(i)}$ are chosen as
the deluxe scalings, i.e.,
$$D_F^{(i)}=(S_F^{(i)}+S_F^{(j)})^{-1} S_F^{(i)}.$$
In our previous work by the first and second authors~\cite{KimChung:2014}, two types of generalized eigenvalue problems are considered on each $F$,
\begin{align}
\widetilde{S}_F^{(i)} v &= \lambda \widetilde{S}_F^{(j)} v,\label{KimChung:EIG:1}\\
\Big( S_F^{(i)}+S_F^{(j)} \Big) v &= \lambda \Big(\widetilde{S}_F^{(i)} + \widetilde{S}_F^{(j)} \Big)v.\label{KimChung:EIG:2}
\end{align}
We notice that, when $S_F^{(i)}=S_F^{(j)}$ and $\widetilde{S}_F^{(i)}=\widetilde{S}_F^{(j)}$
no eigenvectors will be selected from the generalized eigenvalue problem (\ref{KimChung:EIG:1}), the deluxe scaling is identical
to the multiplicity scaling, and \eqref{KimChung:EIG:2} is identical to \eqref{Klawonn:EIG}.
Thus, the method in \cite{KimChung:2014} is identical to those in \cite{Pechstein:slide:2013,Klawonn:PAMM:2014}
for the special case.

\subsection{\label{CB}Parallel sum and change of basis formulation}
In this section, we present some important properties of the parallel sum
and an upper bound estimate in Lemma~\ref{lem:c1}, which will be useful in the analysis of condition numbers.

The parallel sum (\ref{eq:psum}) has the following properties:
\begin{align}
A:B&=B:A, \label{psum:sym}\\
A:B&\le A,\quad A:B \le B. \label{psum:ineq}
\end{align}
When $A$ and $B$ are symmetric and positive definite, we have
\begin{equation}\label{psum:id} A:B=(A^{-1}+B^{-1})^{-1}.\end{equation}
For the proofs of \eqref{psum:sym}-\eqref{psum:id}, we refer to \cite{Parallel:Sum}.

In the following, we provide a key estimate in the analysis of the upper bound of condition number
in the two-dimensional case. This result will also be used in the estimate of upper bound
for face in the three-dimensional case.
Let $F$ be an equivalence class shared by two subdomains $\Omega_i$ and $\Omega_j$.
We consider the generalized eigenvalue problem proposed in \cite{Klawonn:PAMM:2014}:

\noindent
{\bf Generalized eigenvalue problem for an equivalence class shared by two subdomains}
\begin{equation}\label{Geig:pb}
A_F v=\lambda \, \widetilde{S}_F^{(i)}:\widetilde{S}_F^{(j)} v,
\end{equation}
where
$A_F=(D_F^{(j)})^T S_F^{(i)} D_F^{(j)}+(D_F^{(i)})^T S_F^{(j)} D_F^{(i)}$
and
$\widetilde{S}_F^{(l)}$, $l=i,j,$ denote the Schur complement matrix of $S^{(l)}$ after eliminating
unknowns except those interior to $F$.

We note that the eigenvalue $\lambda$ lies in the range $(0,\infty]$.
Let $\lambda_l$ be the $l$-th eigenvalue and $v_l$ be the associated eigenvector,
which is normalized with respect to the inner product $\langle A_F \, \cdot, \,\cdot \rangle$.
Let $\lambda_{TOL}$ be a given tolerance, and
assume that $\lambda_1 \ge \lambda_2 \cdots \ge \lambda_{k} \ge \lambda_{TOL} > \lambda_{k+1}$.
We enforce the following constraints on $w^{(i)}-w^{(j)}$:
\begin{equation}\label{Face-P}\langle A_F (w_F^{(i)}-w_F^{(j)}), v_l \rangle=0,\quad l=1,\cdots,k,\end{equation}
where $w_F^{(i)}$ denotes the unknowns of $w^{(i)}$ interior to $F$.
In other words, $\{v_l \}_{l=1}^k$ forms a basis of the coarse components of $w_F^{(i)}$ and $w_F^{(j)}$ over $F$, that is,
\begin{equation}
(w_F^{(i)})_{\Pi}=\sum_{l=1}^k \langle A_F w_F^{(i)},v_l \rangle v_l
\label{eq:wpi}
\end{equation}
and their coarse components on $F$ are identical, that is,
$$(w_F^{(i)})_{\Pi}=(w_F^{(j)})_{\Pi}.$$

Next, we state and prove the following result, which is useful in the analysis of an upper bound of condition numbers.
\begin{lemma}\label{lem:c1}
The coarse component defined in (\ref{eq:wpi}) satisfies
\begin{equation}
\langle A_F (w_F^{(i)}-(w_F^{(i)})_{\Pi}), w_F^{(i)}-(w_F^{(i)})_{\Pi} \rangle \le
\lambda_{TOL} \langle S^{(i)} w^{(i)},w^{(i)} \rangle. \label{upper-bound-face}
\end{equation}
\end{lemma}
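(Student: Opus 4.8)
The plan is to expand $w_F^{(i)}$ in the eigenbasis of the generalized eigenvalue problem \eqref{Geig:pb} and separate the "coarse" part (large eigenvalues) from the "remainder" part (small eigenvalues). Write $w_F^{(i)} = \sum_l c_l v_l$ where $c_l = \langle A_F w_F^{(i)}, v_l\rangle$, using that the $v_l$ are $A_F$-orthonormal and form a basis of the space of unknowns interior to $F$. Then by \eqref{eq:wpi} the quantity on the left of \eqref{upper-bound-face} is $w_F^{(i)} - (w_F^{(i)})_\Pi = \sum_{l>k} c_l v_l$, so
\begin{equation*}
\langle A_F (w_F^{(i)}-(w_F^{(i)})_{\Pi}), w_F^{(i)}-(w_F^{(i)})_{\Pi}\rangle = \sum_{l>k} c_l^2,
\end{equation*}
again by $A_F$-orthonormality. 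This is the easy bookkeeping step.

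Next I would use the generalized eigenvalue relation to convert the $A_F$-norm into the parallel-sum norm: since $A_F v_l = \lambda_l\,(\widetilde S_F^{(i)}:\widetilde S_F^{(j)})\,v_l$, the $v_l$ are also orthogonal with respect to $\langle (\widetilde S_F^{(i)}:\widetilde S_F^{(j)})\,\cdot,\cdot\rangle$, with $\langle (\widetilde S_F^{(i)}:\widetilde S_F^{(j)})\,v_l, v_l\rangle = 1/\lambda_l$. Hence
\begin{equation*}
\sum_{l>k} c_l^2 = \sum_{l>k} \lambda_l \cdot \frac{c_l^2}{\lambda_l} \le \lambda_{TOL}\sum_{l>k}\frac{c_l^2}{\lambda_l} \le \lambda_{TOL}\sum_{l}\frac{c_l^2}{\lambda_l} = \lambda_{TOL}\,\langle (\widetilde S_F^{(i)}:\widetilde S_F^{(j)})\, w_F^{(i)}, w_F^{(i)}\rangle,
\end{equation*}
where the first inequality uses $\lambda_l < \lambda_{TOL}$ for $l>k$ and the second just adds back nonnegative terms. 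This reduces everything to bounding the parallel-sum quadratic form by the subdomain energy.

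Finally, I would invoke the parallel-sum inequality \eqref{psum:ineq}, namely $\widetilde S_F^{(i)}:\widetilde S_F^{(j)} \le \widetilde S_F^{(i)}$, to get
\begin{equation*}
\langle (\widetilde S_F^{(i)}:\widetilde S_F^{(j)})\, w_F^{(i)}, w_F^{(i)}\rangle \le \langle \widetilde S_F^{(i)} w_F^{(i)}, w_F^{(i)}\rangle \le \langle S^{(i)} w^{(i)}, w^{(i)}\rangle,
\end{equation*}
where the last step is the standard minimal-energy (Schur complement) property: $\widetilde S_F^{(i)}$ is obtained from $S^{(i)}$ by eliminating all unknowns except those interior to $F$, so $\langle \widetilde S_F^{(i)} w_F^{(i)}, w_F^{(i)}\rangle$ is the energy of the discrete harmonic extension of $w_F^{(i)}$ (with zero data elsewhere on $\partial\Omega_i$), which is minimal among all $w^{(i)}$ restricting to $w_F^{(i)}$ on the interior of $F$. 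Chaining the three displays gives \eqref{upper-bound-face}.

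The only genuinely delicate point is the handling of $\lambda_l = \infty$ and the pseudo-inverse in the definition of the parallel sum: one must check that the eigenvectors with infinite eigenvalue lie in the kernel of $\widetilde S_F^{(i)}:\widetilde S_F^{(j)}$, contribute $0$ to $\langle(\widetilde S_F^{(i)}:\widetilde S_F^{(j)})w_F^{(i)},w_F^{(i)}\rangle$, and are always captured among the first $k$ constraints (so they never appear in the $l>k$ sum); once that is settled the $1/\lambda_l$ manipulation is harmless. Everything else is routine linear algebra using the $A_F$-orthonormalization stated before the lemma and the already-established properties \eqref{psum:ineq} of the parallel sum.
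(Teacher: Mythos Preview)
Your proof is correct and follows essentially the same route as the paper's: both arguments pass from the $A_F$-norm of the remainder to the parallel-sum norm via the eigenvalue bound $\lambda_l<\lambda_{TOL}$, then drop (or add back) the nonnegative coarse contribution to reach $\langle(\widetilde S_F^{(i)}{:}\widetilde S_F^{(j)})\,w_F^{(i)},w_F^{(i)}\rangle$, and finish with \eqref{psum:ineq} and the Schur-complement minimality. Your version simply makes the eigenbasis expansion explicit where the paper phrases the same steps as ``by the generalized eigenvalue problem'' and ``by orthogonality of eigenfunctions''; your remark on the $\lambda_l=\infty$ case is a useful clarification that the paper leaves implicit.
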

\begin{proof}
First, by the generalized eigenvalue problem (\ref{Geig:pb}), we have
\begin{equation*}
\langle A_F (w_F^{(i)}-(w_F^{(i)})_{\Pi}), w_F^{(i)}-(w_F^{(i)})_{\Pi} \rangle \le
\lambda_{TOL} \langle \widetilde{S}_F^{(i)}: \widetilde{S}_F^{(j)} (w_F^{(i)}-(w_F^{(i)})_{\Pi}), w_F^{(i)}-(w_F^{(i)})_{\Pi} \rangle.
\end{equation*}
Using the orthogonality of eigenfunctions,
\begin{equation*}
\begin{split}
&\; \langle \widetilde{S}_F^{(i)}: \widetilde{S}_F^{(j)} (w_F^{(i)}-(w_F^{(i)})_{\Pi}), w_F^{(i)}-(w_F^{(i)})_{\Pi} \rangle \\
= &\; \langle \widetilde{S}_F^{(i)}: \widetilde{S}_F^{(j)} w_F^{(i)},w_F^{(i)} \rangle
- \langle \widetilde{S}_F^{(i)}:\widetilde{S}_F^{(j)} (w_F^{(i)})_{\Pi}, (w_F^{(i)})_{\Pi} \rangle.
\end{split}
\end{equation*}
Hence,
\begin{align}
 \langle A_F (w_F^{(i)}-(w_F^{(i)})_{\Pi}), w_F^{(i)}-(w_F^{(i)})_{\Pi} \rangle \nonumber
\le &\;  \lambda_{TOL} \langle \widetilde{S}_F^{(i)}: \widetilde{S}_F^{(j)} w_F^{(i)},w_F^{(i)} \rangle
\nonumber \\
\le & \; \lambda_{TOL} \langle \widetilde{S}_F^{(i)} w_F^{(i)},w_F^{(i)} \rangle \nonumber \\
\le & \; \lambda_{TOL} \langle S^{(i)} w^{(i)},w^{(i)} \rangle \nonumber
\end{align}
where (\ref{psum:ineq}) is used in the second inequality above, and the third inequality follows from the definition
of $\widetilde{S}_F^{(i)}$.
This proves the lemma.
\end{proof}



Finally,
we will present how to \revHH{apply the scaling matrices to the residual vector} when general scaling matrices are used.
In practice, we use transformed matrices in the implementation of the BDDC algorithm after choosing
the set of adaptive primal unknowns.
Let $P_F$ be the change of basis matrix of the form,
$$P_F=[ v_1 \; v_2 \; \cdots  v_k \; v_{k+1} \; \cdots \; v_{N(F)} ],$$
where $v_l$ are eigenvectors of the generalized eigenvalue problem in \eqref{Geig:pb}, and
the first $k$ vectors are related to the primal unknowns.
Let $\check{S}_F^{(i)}=P_F^T S_F^{(i)} P_F$
be the transformed matrix.
In the BDDC algorithm, after performing the change of basis, the following deluxe scaling matrices can be chosen
\begin{equation}\label{CHB:scaling}\check{D}_{F}^{(i)}=(\check{S}_{F}^{(i)} + \check{S}_{F}^{(j)})^{-1} \check{S}_{F}^{(i)}\end{equation}
and they are partitioned  into the following block form:
$$\check{D}_F^{(i)}=\begin{pmatrix} \check{D}_{F,\Pi \Pi}^{(i)} & \check{D}_{F,\Pi \Delta}^{(i)} \\
\check{D}_{F,\Delta \Pi}^{(i)} & \check{D}_{F,\Delta \Delta}^{(i)} \end{pmatrix},$$
where $\Pi$ and $\Delta$ denote blocks to the adaptive primal unknowns and the remaining dual unknowns, respectively.
In the preconditioner $M^{-1}_{BDDC}$, application of $\widetilde{D}^T$ and $\widetilde{D}$
should address the above block structure of $\check{D}_F^{(i)}$.
In detail, the result of scaling $\widetilde{D}^T$ on dual unknowns of $F$ is
$$\widetilde{D}^T \begin{pmatrix} u_{\Delta} \\ u_{\Pi}
\end{pmatrix}|_{i,F,\Delta}=(\check{D}^{(i)}_{F,\Delta \Delta})^T u_{F,\Delta}^{(i)} + {(\check{D}^{(i)}_{F,\Pi \Delta})^T u_{F,\Pi}},$$
where $u_{F,\Delta}^{(i)}$ and $u_{F,\Pi}$ are dual and primal unknowns of $F$.
The result of scaling $\widetilde{D}$ by dual unknowns $u_{\Delta}$ on unknowns of $F$
is
$$\widetilde{D} \begin{pmatrix} u_{\Delta} \\ 0 \end{pmatrix}|_{i,F}
=\begin{pmatrix}
  \check{D}_{F,\Delta \Delta}^{(i)} u_{F,\Delta}^{(i)}\\
  {  \check{D}_{F,\Pi \Delta}^{(i)} u_{F,\Delta}^{(i)}+\check{D}_{F,\Pi \Delta}^{(j)} u_{F,\Delta}^{(j)} }
\end{pmatrix}.$$

\section{\label{ext:3D}Three-dimensional extension}
For the three-dimensional case, there are three types of equivalence classes: vertices, edges, and faces.
We will choose the unknowns at subdomain vertices as part of the primal unknowns and enforce strong continuity on them.
For faces and edges, we will choose primal constraints by solving appropriate generalized eigenvalue problems.

In this section we will construct two types of algorithms and later show that
the condition numbers for both of them are bounded above by $C \lambda_{TOL}$.
\revHH{The first method is the BDDC algorithm with a change of basis formulation
and the second method is the FETI-DP algorithm with a projector preconditioner.}
The analysis of these methods will be given in Section~\ref{analysis}.


The adaptive primal constraints on faces can be chosen similarly as in the two-dimensional
case by solving the generalized eigenvalue problem in \eqref{Geig:pb}.
On the other hand, edges in the three-dimensional case are equivalence classes shared by more than two subdomains and thus construction of appropriate generalized eigenvalue problems
has been a difficult task. In the following we present an elaborate construction of generalized eigenvalue problems
for the edges.

\subsection{\label{sec:GEIG:edge}Generalized eigenvalue problem for edge}
To simplify the notations, we assume that an edge $E$ is shared by three subdomains $\{\Omega_i,\Omega_j,\Omega_k\}$.
We will see later that,
in order to derive an upper bound for the condition number, we have to estimate the following term
on the edge $E$ for the subdomain $\Omega_i$,
\begin{equation}\label{edge-term}
\sum_{l=j,k} \langle (D_E^{(l)})^T S_E^{(i)} D_E^{(l)}(w_E^{(i)}-w_E^{(l)}), w_E^{(i)}-w_E^{(l)} \rangle,
\end{equation}
where $S_E^{(i)}$ is the block matrix of $S^{(i)}$ corresponding to unknowns interior to the edge $E$
and $D_E^{(l)}$ are weight factors introduced in the preconditioner.
We remark that one needs to estimate
similar terms for the other subdomains $\Omega_j$ and $\Omega_k$ sharing the edge $E$,
\begin{align}
\sum_{l=i,k} \langle (D_E^{(l)})^T S_E^{(j)} D_E^{(l)}(w_E^{(j)}-w_E^{(l)}), w_E^{(j)}-w_E^{(l)} \rangle,
\label{edge-term-j}\\
\sum_{l=i,j} \langle (D_E^{(l)})^T S_E^{(k)} D_E^{(l)}(w_E^{(k)}-w_E^{(l)}), w_E^{(k)}-w_E^{(l)} \rangle.
\label{edge-term-k}
\end{align}

By subtracting the common primal part $(w_E^{(l)})_{\Pi}$ from each $w_E^{(l)}$, $l=i,j,k$ and collecting terms
for each $z_E^{(i)}=w_E^{(i)}-(w_E^{(i)})_{\Pi}$ from \eqref{edge-term}-\eqref{edge-term-k},
\begin{equation*}
\langle \sum_{l=j,k}( (D_E^{(l)})^T S_E^{(i)} D_E^{(l)}+(D_E^{(i)})^T S_E^{(l)}D_E^{(i)}) z_E^{(i)},z_E^{(i)} \rangle,
\end{equation*}
we can bound the sum of the terms~\eqref{edge-term}-\eqref{edge-term-k}
by the following
\begin{equation}\label{three-terms}
2 \left( \langle A_E^{(i)} z_E^{(i)},z_E^{(i)}\rangle
+\langle A_E^{(j)} z_E^{(j)},z_E^{(j)}\rangle
+\langle A_E^{(k)} z_E^{(k)},z_E^{(k)}\rangle\right),
\end{equation}
where
\begin{equation}
A_E^{(m)}=\sum_{l \in I(E)\setminus\{m\}} ( (D_E^{(l)})^T S_E^{(m)} D_E^{(l)} + (D_E^{(m)})^T S_E^{(l)}D_E^{(m)} ),
\label{eq:AE}
\end{equation}
and $I(E)$ is the set of subdomain indices sharing the edge $E$.

Based on this observation, we give generalized eigenvalue problems for edges:

\noindent
{\bf Generalized eigenvalue problem for edge}
\begin{equation}
A_E v=\lambda \widetilde{S}_E v,
\label{eq:Geig:edge}
\end{equation}
where $$A_E=\sum_{m \in I(E)} \sum_{l \in I(E)\setminus\{m\}} (D_E^{(l)})^T S_E^{(m)} D_E^{(l)} ,\quad \widetilde{S}_E=\widetilde{S}_E^{(i)}:\widetilde{S}_E^{(j)}:\widetilde{S}_E^{(k)},$$
and $\widetilde{S}_E^{(l)}$ are the Schur complement obtained from $S^{(l)}$ after eliminating
unknowns except those interior to $E$.
We notice that
\begin{equation}
\label{eq:matrixineq}
A_E^{(m)} \le A_E, \quad  \widetilde{S}_E \le \widetilde{S}_E^{(m)}, \quad \forall m \in I(E).
\end{equation}
We choose the eigenvectors $v_l$ with their associated eigenvalues $\lambda_l$
greater than $\lambda_{TOL}$ as primal components.
The eigenvectors are then normalized with respect to the inner product,
$\langle A_E \, \cdot \, , \, \cdot \rangle$.
Let $w_E^{(l)}$ denote the unknowns of $w^{(l)}$ interior to the edge $E$.
With the chosen set of eigenvectors $\{ v_n \}_{n=1}^{P_E}$, we enforce the following
primal constraints on $w_E^{(l)}$,
\begin{equation}\label{Edge-P}\langle A_E v_n,(w_E^{(l)}-w_E^{(k)}) \rangle=0,\quad n=1,2,\cdots, P_E.
\end{equation}
In other words, the coarse components are
\begin{equation}
(w_E^{(l)})_{\Pi}=\sum_{n=1}^{P_E}\langle A_E v_n, w_E^{(l)} \rangle v_n, \quad l=i,j,k.
\label{eq:edgecoarse}
\end{equation}
Next, we prove the following inequality, which is crucial in our analysis of condition number bounds.

\begin{lemma}\label{edge:upperbd}
For the coarse component defined in (\ref{eq:edgecoarse}), we obtain that
\begin{equation}
\langle A_E^{(i)} z_E^{(i)},z_E^{(i)} \rangle
\le
\lambda_{TOL} \langle S^{(i)} w^{(i)},w^{(i)} \rangle, \label{3d-edge-bound}
\end{equation}
where $z_E^{(i)}=w_E^{(i)}-(w_E^{(i)})_{\Pi}$ and  $A_E^{(i)}$ is given in (\ref{eq:AE}).
\end{lemma}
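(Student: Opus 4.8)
The plan is to mimic the proof of Lemma~\ref{lem:c1} exactly, using the edge generalized eigenvalue problem \eqref{eq:Geig:edge} in place of the face problem \eqref{Geig:pb}, together with the matrix inequalities recorded in \eqref{eq:matrixineq}. First I would note that, by construction, the coarse component $(w_E^{(i)})_{\Pi}$ in \eqref{eq:edgecoarse} is the $\langle A_E\,\cdot,\cdot\rangle$-orthogonal projection of $w_E^{(i)}$ onto the span of the selected eigenvectors $\{v_n\}_{n=1}^{P_E}$, so $z_E^{(i)}=w_E^{(i)}-(w_E^{(i)})_{\Pi}$ lies in the $\langle A_E\,\cdot,\cdot\rangle$-orthogonal complement, i.e. it is spanned by the eigenvectors $v_n$ with $\lambda_n<\lambda_{TOL}$. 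Hence by the Rayleigh-quotient characterization coming from \eqref{eq:Geig:edge},
\begin{equation*}
\langle A_E z_E^{(i)},z_E^{(i)}\rangle \le \lambda_{TOL}\,\langle \widetilde{S}_E z_E^{(i)},z_E^{(i)}\rangle,
\end{equation*}
which is the analogue of the first displayed inequality in the proof of Lemma~\ref{lem:c1}. (As there, the orthogonality lets one discard the projected part, since $\langle\widetilde S_E (w_E^{(i)})_\Pi,(w_E^{(i)})_\Pi\rangle\ge 0$; alternatively one argues directly on the low-eigenvalue subspace.)

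Next I would use the two inequalities in \eqref{eq:matrixineq}. On the left, $A_E^{(i)}\le A_E$ gives $\langle A_E^{(i)} z_E^{(i)},z_E^{(i)}\rangle \le \langle A_E z_E^{(i)},z_E^{(i)}\rangle$. On the right, $\widetilde{S}_E = \widetilde{S}_E^{(i)}:\widetilde{S}_E^{(j)}:\widetilde{S}_E^{(k)} \le \widetilde{S}_E^{(i)}$ (this is the parallel-sum bound \eqref{psum:ineq}, applied twice) gives $\langle \widetilde{S}_E z_E^{(i)},z_E^{(i)}\rangle \le \langle \widetilde{S}_E^{(i)} z_E^{(i)},z_E^{(i)}\rangle$. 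Chaining these,
\begin{equation*}
\langle A_E^{(i)} z_E^{(i)},z_E^{(i)}\rangle \le \lambda_{TOL}\,\langle \widetilde{S}_E^{(i)} z_E^{(i)},z_E^{(i)}\rangle.
\end{equation*}
Finally, $\widetilde{S}_E^{(i)}$ is the Schur complement of $S^{(i)}$ after eliminating all unknowns except those interior to $E$, so by the standard energy-minimization property of Schur complements $\langle \widetilde{S}_E^{(i)} z_E^{(i)},z_E^{(i)}\rangle$ equals the minimal value of $\langle S^{(i)} v,v\rangle$ over all $v\in W_i$ whose restriction to the interior of $E$ equals $z_E^{(i)}$; in particular it is bounded by $\langle S^{(i)} w^{(i)},w^{(i)}\rangle$ provided $w^{(i)}$ restricted to the interior of $E$ is $z_E^{(i)}$, which holds because $(w_E^{(i)})_{\Pi}$ only modifies the components of $w^{(i)}$ interior to $E$. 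Combining yields \eqref{3d-edge-bound}.

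The step that requires the most care is the reduction $\langle\widetilde S_E^{(i)} z_E^{(i)},z_E^{(i)}\rangle \le \langle S^{(i)} w^{(i)},w^{(i)}\rangle$: one must be precise about what ``interior to $E$'' means and check that passing from $w^{(i)}$ to $z_E^{(i)}$ does not touch any unknowns outside the edge interior, so that $w^{(i)}$ is a legitimate competitor in the Schur-complement minimization. This is exactly the role played by the last inequality in the proof of Lemma~\ref{lem:c1}, and the same reasoning transfers verbatim; the only genuinely new ingredient relative to the two-dimensional case is the two-fold parallel-sum bound $\widetilde S_E\le\widetilde S_E^{(i)}$, which is immediate from \eqref{psum:ineq}. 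Everything else is bookkeeping with \eqref{eq:matrixineq} and the eigenvector orthogonality.
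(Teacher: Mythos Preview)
Your overall plan is exactly the paper's, but the final step contains a genuine error. You claim that $w^{(i)}$ restricted to the interior of $E$ equals $z_E^{(i)}$, and justify this by saying ``$(w_E^{(i)})_{\Pi}$ only modifies the components of $w^{(i)}$ interior to $E$.'' That sentence is true but does not give the conclusion you want: by definition $w^{(i)}|_E = w_E^{(i)}$, while $z_E^{(i)} = w_E^{(i)} - (w_E^{(i)})_{\Pi}$, and in general $(w_E^{(i)})_{\Pi}\neq 0$. Hence $w^{(i)}$ is \emph{not} an admissible competitor in the Schur-complement minimization for $\langle \widetilde S_E^{(i)} z_E^{(i)}, z_E^{(i)}\rangle$, and the inequality $\langle \widetilde S_E^{(i)} z_E^{(i)}, z_E^{(i)}\rangle \le \langle S^{(i)} w^{(i)}, w^{(i)}\rangle$ is unjustified as written.

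The fix is the very orthogonality step you mention parenthetically, but it must be applied at the $\widetilde S_E$ level, \emph{before} passing to $\widetilde S_E^{(i)}$. The eigenvectors of \eqref{eq:Geig:edge} are $\widetilde S_E$-orthogonal, so
\[
\langle \widetilde S_E z_E^{(i)}, z_E^{(i)}\rangle
= \langle \widetilde S_E w_E^{(i)}, w_E^{(i)}\rangle - \langle \widetilde S_E (w_E^{(i)})_{\Pi}, (w_E^{(i)})_{\Pi}\rangle
\le \langle \widetilde S_E w_E^{(i)}, w_E^{(i)}\rangle.
\]
Only now apply $\widetilde S_E \le \widetilde S_E^{(i)}$ to obtain $\langle \widetilde S_E^{(i)} w_E^{(i)}, w_E^{(i)}\rangle$, and then the Schur-complement bound with the legitimate competitor $w^{(i)}$ (whose restriction to $E$ is $w_E^{(i)}$) gives $\le \langle S^{(i)} w^{(i)}, w^{(i)}\rangle$. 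This is precisely the order the paper uses. The point is that the eigenvectors are orthogonal with respect to $\widetilde S_E$, not with respect to $\widetilde S_E^{(i)}$, so once you have passed to $\widetilde S_E^{(i)}$ you can no longer drop the projected part for free.
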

\begin{proof}
The proof is similar to that of Lemma \ref{lem:c1}. By using (\ref{eq:matrixineq}) and the
generalized eigenvalue problem (\ref{eq:Geig:edge}), we have
\begin{equation*}
\langle A_E^{(i)} z_E^{(i)},z_E^{(i)} \rangle  \le \langle A_E z_E^{(i)},z_E^{(i)} \rangle
\le \lambda_{TOL} \langle \widetilde{S}_E z_E^{(i)},z_E^{(i)} \rangle.
\end{equation*}
Using the definition of $z_E^{(i)}$ and the orthogonality of eigenfunctions, we obtain
\begin{equation*}
\langle \widetilde{S}_E z_E^{(i)},z_E^{(i)} \rangle
\le
\left(\langle \widetilde{S}_E w_E^{(i)},w_E^{(i)} \rangle-
\langle \widetilde{S}_E (w_E^{(i)})_{\Pi}, (w_E^{(i)})_{\Pi} \rangle \right).
\end{equation*}
Combining the above results,
\begin{align*}
\langle A_E^{(i)} z_E^{(i)},z_E^{(i)} \rangle
& \le \lambda_{TOL} \langle \widetilde{S}_E w_E^{(i)},w_E^{(i)} \rangle \nonumber \\
& \le \lambda_{TOL} \langle \widetilde{S}_E^{(i)} w_E^{(i)},w_E^{(i)} \rangle \\
 & \le \lambda_{TOL} \langle S^{(i)} w^{(i)},w^{(i)} \rangle \nonumber
\end{align*}
where we have used (\ref{eq:matrixineq}) in the second inequality and the definition of $\widetilde{S}_E^{(i)}$
in the last inequality.
\end{proof}

Finally, we remark that the above result holds for
the other terms in (\ref{three-terms}).

Here, we will present how to \revHH{apply} the scaling matrices on edges when general scaling matrices are used.
Similar to $P_F$, let $P_E$ be the change of basis matrix of the form,
$$P_E=[ v_1 \; v_2 \; \cdots  v_{P_E} \; v_{P_E+1} \; \cdots \; v_{N(E)} ],$$
where $N(E)$ is the number of interior unknowns on $E$, $v_l$ are eigenvectors of the generalized eigenvalue problem in \eqref{eq:Geig:edge}, and
the first $P_E$ vectors are related to the primal unknowns.
Let $\check{S}_E^{(i)}=P_E^T S_E^{(i)} P_E$
be the transformed matrix.
In the BDDC algorithm, after performing the change of basis, the following deluxe scaling matrices can be chosen
\begin{equation}\label{CHB:scaling:edge}
\check{D}_{E}^{(i)}=(\check{S}_{E}^{(i)} + \check{S}_{E}^{(j)} + \check{S}_{E}^{(k)})^{-1} \check{S}_{E}^{(i)}
\end{equation}
and they are partitioned  in the following block form:
$$\check{D}_E^{(i)}=\begin{pmatrix}
\check{D}_{E,\Pi \Pi}^{(i)} & \check{D}_{E,\Pi \Delta}^{(i)} \\
\check{D}_{E,\Delta \Pi}^{(i)} & \check{D}_{E,\Delta \Delta}^{(i)} \end{pmatrix},$$
where $\Pi$ and $\Delta$ denote blocks to the adaptive primal unknowns and the remaining dual unknowns, respectively.
In the preconditioner $M^{-1}_{BDDC}$, application of $\widetilde{D}^T$ and $\widetilde{D}$
should address the above block structure of $\check{D}_E^{(i)}$ as in $\check{D}_F^{(i)}$.
In detail, the result of scaling $\widetilde{D}^T$ on dual unknowns of $E$ is
$$\widetilde{D}^T \begin{pmatrix} u_{\Delta} \\ u_{\Pi}
\end{pmatrix}|_{i,E,\Delta}=(\check{D}^{(i)}_{E,\Delta \Delta})^T u_{E,\Delta}^{(i)} + {(\check{D}^{(i)}_{E,\Pi \Delta})^T u_{E,\Pi}},$$
where $u_{E,\Delta}^{(i)}$ and $u_{E,\Pi}$ are dual and primal unknowns of $E$ in $\Omega_i$, respectively.
The result of scaling $\widetilde{D}$ by dual unknowns $u_{\Delta}$ on unknowns of $E$
is
$$\widetilde{D} \begin{pmatrix} u_{\Delta} \\ 0 \end{pmatrix}|_{i,E}
=\begin{pmatrix}
  \check{D}_{E,\Delta \Delta}^{(i)} u_{E,\Delta}^{(i)}\\
  { \sum_{m \in I(E)} \check{D}_{E,\Pi \Delta}^{(m)} u_{E,\Delta}^{(m)} }
\end{pmatrix}.$$

\subsection{\label{3d-constraints-weights} Constraints and scaling matrices in FETI-DP methods}
In this section, we discuss constraints and scaling matrices used for the FETI-DP methods in
the three-dimensional case.
One key feature of our method is that
we will enforce the continuity on edges $E$ using
fully redundant Lagrange multipliers
and use
appropriate scaling matrices for each of the Lagrange multipliers.
For simplicity, we
assume that there are three subdomains $\{ \Omega_i, \Omega_j,\Omega_k\}$ sharing an edge $E$.
We use
$u_{E,l}^{(i)}$, $u_{E,l}^{(j)}$, and $u_{E,l}^{(k)}$ to represent the
unknowns sharing the same geometric location on $E$.
We then enforce the fully redundant continuity constraints on these unknowns as follows:
\begin{align*}
u_{E,l}^{(i)}-u_{E,l}^{(j)}&=0,\; l=1,\cdots, N_E, \\
u_{E,l}^{(i)}-u_{E,l}^{(k)}&=0, \; l=1,\cdots, N_E,\\
u_{E,l}^{(j)}-u_{E,l}^{(k)}&=0, \; l=1,\cdots, N_E.
\end{align*}
The above constraints will be enforced weakly using Lagrange multipliers.
Because of this construction, the formulation of the FETI-DP method in the three-dimensional case
is similar to the one presented in Section \ref{BDDC:FETI:theory} for the two-dimensional case.

Similar to the discussion in Section \ref{BDDC:FETI:theory},
we need to define the matrix $B_{D,\Delta}^{(i)}$; see \eqref{BTDs}.
In three dimensions, the rows corresponding to Lagrange multipliers to the unknowns on a face $F$
are multiplied with the scaling matrix $(D_{F}^{(j)})^T$ as in the two dimensions.
On the other hand, the unknowns on an edge $E$ are shared by more than two subdomains, for example, $\Omega_l$, $l=i,j,k$, and
thus the rows corresponding to Lagrange multipliers connecting $\Omega_i$ and $\Omega_l$
are multiplied with the scaling matrix $(D_{E}^{(l)})^T$, $l=j,k$.
We note that  the scaling matrices $D_E^{(l)}$ satisfy $\sum_{m=i,j,k} D_E^{(m)}=I$.
Finally,
we remark that for the use of simple multiplicity scalings, the above matrices are given as
$D_E^{(m)}=1/3$ and $D_F^{(m)}=1/2$, and for the deluxe scalings $D_E^{(m)}=(S_E^{(i)}+S_E^{(j)}+S_E^{(k)})^{-1} S_E^{(m)}$ with $m=i,j,k$ and
$D_F^{(m)}=(S_F^{(i)}+S_F^{(j)})^{-1} S_F^{(m)}$ with $m=i,j$.

\section{\label{analysis} Estimates of condition numbers}
In this section, we will derive upper bounds of the condition numbers for the two algorithms considered in this paper.
In particular, we will show that, for the BDDC algorithm with a change
of basis formulation and for the FETI-DP
algorithm with a projector preconditioner, the condition numbers
are bounded above by $C \lambda_{TOL}$.
We note that the adaptive primal constraints are enforced by a projection in the later case.

We will now form a matrix $U$ by using all the chosen eigenvectors
from the generalized eigenvalue problems (\ref{Geig:pb}) and (\ref{eq:Geig:edge}).
The matrix $U$ will be used in the projector preconditioner to enforce the adaptively
chosen primal constraints in \eqref{Face-P} and \eqref{Edge-P} on the residual at each iteration
of the FETI-DP algorithm.
Notice that the dimension of the matrix $U$ is $N_M \times N_P$,
where $N_M$ is the number of degrees of freedom of the Lagrange multipliers,
and $N_P$ is obtained by adding the following number from all edges $E$ and all faces $F$:
the number of selected eigenvectors per an edge $E$ (or a face $F$) times the number of all possible pairs among the subdomains sharing $E$ (or $F$).
In a more detail, for an edge $E$ with the chosen set of eigenvectors $\{ v_1,\cdots,v_{P_E} \}$
the following primal constraints are enforced on $E$,
$$\langle A_E v_n, w_{E}^{(l)}-w_{E}^{(k)} \rangle=0,\quad \forall l,k \in I(E),\quad \forall n=1,\cdots,P_E,$$
and thus the matrix $U$ can be formed from $A_E v_n$ by putting $A_E v_n$ to the corresponding rows and columns in the matrix $U$.

For the FETI-DP method with a projector preconditioner, one first formulates the standard FETI-DP
method and solves the corresponding algebraic system by a projector preconditioner; see~\cite{PP-FETI-DP}
for more details.
In the standard FETI-DP method, the unknowns at subdomain vertices are selected as primal unknowns
and strong continuity is enforced on them, and Lagrange multipliers are introduced
to enforce weak continuity on the remaining unknowns of the subdomain interfaces.
We note that for an easier implementation of $3D$ problems, we have employed fully redundant Lagrange multipliers.
As a result of elimination of unknowns except the Lagrange multipliers, \revHH{the dual problem in \eqref{sys:Fdp}}
is obtained for $\lambda \in \text{Range}(B)$.
Let
\begin{equation}
F_{DP}=B \widetilde{S}^{-1} B^T. \label{F_DP}
\end{equation}
We introduce the following projection
\begin{equation*}\label{def:P}
P=U(U^T F_{DP} U)^{-1} U^T F_{DP},
\end{equation*}
and it satisfies the following properties, see \cite{PP-FETI-DP}:
\begin{align}
\text{Range}(I-P)   &\perp_{F_{DP}} \text{Range}(P)=\text{Range}(U), \label{propU1}\\
\text{Range}(I-P^T) &\perp  \; \; \; \text{Ker}(I-P)=\text{Range}(U). \label{propU2}
\end{align}

The FETI-DP algorithm with a projector preconditioner solves \revHH{\eqref{sys:Fdp}} by a preconditioned conjugate gradient method
using the following preconditioner
\begin{equation}
M^{-1}_{PP}=P_B^T(I-P) M_{FETI}^{-1} (I-P)^T P_B, \label{M_PP}
\end{equation}
where $P_B$ is the orthogonal projection onto $\text{Range}(B)$ and \revHH{$M_{FETI}^{-1}$ is defined in \eqref{precond:Fdp}.}
In practice, $M_{PP}^{-1}$ is applied to the residual vector at each iteration
and thus the projections $P_B$ and $P_B^T$ in $M_{PP}^{-1}$ do not need to be enforced
due to the operators $B$ and $B^T$ in $F_{DP}$.


\subsection{Change of basis formulation}

Now, we will derive an upper bound of the condition number
for the case of change of basis formulation.
We can form the resulting BDDC system after performing a change of basis to make the adaptive primal constraints explicit and treat them just like unknowns at subdomain vertices.
To stress this, we will use the notation $\widetilde{S}_{a}$ instead of the standard notation $\widetilde{S}$ and similarly $\widetilde{w}_a$ for unknowns $\widetilde{w}$.
\revHH{Following the proof in \cite[Theorem 1]{JW-Stokes-BDDC-Proof}, we will need to estimate the bound,
$$\langle \widetilde{S}_a (I-E_D) \widetilde{w}_a, (I-E_D) \widetilde{w}_a \rangle \le C
\langle \widetilde{S}_a \widetilde{w}_a, \widetilde{w}_a \rangle.$$}
We also note that Lemmas~\ref{lem:c1} and \ref{edge:upperbd} hold for the matrices after the change of basis.

\begin{lemma}\label{upper-bound-change-of-basis}
We obtain
$$\langle \widetilde{S}_a (I-E_D) \widetilde{w}_a, (I-E_D) \widetilde{w}_a \rangle \le C \lambda_{TOL}
\langle \widetilde{S}_a \widetilde{w}_a, \widetilde{w}_a \rangle,$$
where $\lambda_{TOL}$ is the tolerance used in the selection of adaptive primal constraints
and $C$ is a constant depending only on $N_{F(i)}$, $N_{E(i)}$, $N_{I(E)}$, which are the number of faces per subdomain,
the number of edges per subdomain, and the number of subdomains sharing an edge $E$, respectively. In particular,
\begin{equation}\label{explicit:C} C=8 \max \left\{  \max_i \{ N_{F(i)}^2  \},\; \max_i  \left\{ N_{E(i)}^2 \max_{E \in E(i)} \{ N_{I(E)} \}  \right\} \right\},\end{equation}
where $E(i)$ is the set of edges in $\partial \Omega_i$.
\end{lemma}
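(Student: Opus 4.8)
The plan is to estimate $\langle \widetilde{S}_a (I-E_D) \widetilde{w}_a, (I-E_D) \widetilde{w}_a \rangle$ by first passing from $E_D$ to $P_D = I - E_D$ and decomposing the jump operator $P_D \widetilde{w}_a$ into contributions coming from individual faces and edges. Writing $\widetilde{w}_a = (w^{(1)}, \dots, w^{(N)})$ (with the primal parts strongly coupled), the vector $P_D \widetilde{w}_a$ restricted to subdomain $\Omega_i$ is a sum over the faces $F \in F(i)$ and edges $E \in E(i)$ of scaled differences of the form $(D_F^{(j)})^T(w_F^{(i)} - w_F^{(j)})$ on each face $F$ shared with $\Omega_j$, and $\sum_{l \in I(E)\setminus\{i\}} (D_E^{(l)})^T(w_E^{(i)} - w_E^{(l)})$ on each edge $E$. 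Since $\widetilde{w}_a$ is already continuous at the adaptively chosen primal unknowns after the change of basis, each such difference $w_F^{(i)} - w_F^{(j)}$ equals $z_F^{(i)} - z_F^{(j)}$ where $z_F^{(i)} = w_F^{(i)} - (w_F^{(i)})_{\Pi}$, and similarly on edges, i.e. the primal components cancel in the jumps. This is exactly the setup in which Lemmas~\ref{lem:c1} and \ref{edge:upperbd} apply.

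The key steps I would carry out, in order, are: (1) Use $\langle \widetilde{S}_a P_D \widetilde{w}_a, P_D \widetilde{w}_a \rangle = \sum_{i=1}^N \langle S^{(i)} (P_D \widetilde{w}_a)^{(i)}, (P_D \widetilde{w}_a)^{(i)} \rangle$ and the fact that $(P_D\widetilde{w}_a)^{(i)}$ is supported on $\partial\Omega_i$, so that only the face and edge blocks $S_F^{(i)}$, $S_E^{(i)}$ enter; (2) apply a Cauchy--Schwarz / triangle inequality in the $S^{(i)}$-seminorm to split the contribution of $\Omega_i$ into a sum over its faces and edges, picking up a factor like $(N_{F(i)} + N_{E(i)})$ or, after squaring, $2(N_{F(i)}^2 + N_{E(i)}^2)$ — this is where the combinatorial constant $C$ in \eqref{explicit:C} is generated; (3) for a single face term, bound $\langle (D_F^{(j)})^T S_F^{(i)} D_F^{(j)} (w_F^{(i)} - w_F^{(j)}), w_F^{(i)} - w_F^{(j)}\rangle$ by $\langle A_F(w_F^{(i)}-w_F^{(j)}), w_F^{(i)}-w_F^{(j)}\rangle$ (since $A_F$ is a sum of such PSD terms), then split $w_F^{(i)} - w_F^{(j)} = z_F^{(i)} - z_F^{(j)}$ and invoke Lemma~\ref{lem:c1} on each piece to get $\le 2\lambda_{TOL}(\langle S^{(i)}w^{(i)}, w^{(i)}\rangle + \langle S^{(j)}w^{(j)}, w^{(j)}\rangle)$; (4) for a single edge term, use the reduction in \eqref{edge-term}--\eqref{three-terms} together with $A_E^{(m)} \le A_E$ from \eqref{eq:matrixineq} and Lemma~\ref{edge:upperbd} to bound the edge contribution by $\lambda_{TOL}$ times a sum of $\langle S^{(l)}w^{(l)}, w^{(l)}\rangle$ over $l \in I(E)$, picking up the factor $N_{I(E)}$; (5) sum all face and edge contributions over $i$, noting each face is counted by its two subdomains and each edge by its $N_{I(E)}$ subdomains, and collect the constant, arriving at $C\lambda_{TOL} \sum_i \langle S^{(i)}w^{(i)}, w^{(i)}\rangle = C\lambda_{TOL} \langle \widetilde{S}_a \widetilde{w}_a, \widetilde{w}_a\rangle$; (6) finally translate back from $P_D = I - E_D$ to $I - E_D$ itself (trivial since they are the same operator) to conclude.

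I expect the main obstacle to be the careful bookkeeping of step (2) together with step (5): getting the combinatorial constant to come out as precisely $8\max\{\max_i N_{F(i)}^2, \max_i(N_{E(i)}^2 \max_{E} N_{I(E)})\}$ requires being disciplined about exactly where the factors of $2$ enter — one from the triangle inequality splitting $w_F^{(i)} - w_F^{(j)} = z_F^{(i)} - z_F^{(j)}$ and squaring, one from the $A_E^{(i)} \le A_E$ step absorbing the factor of $2$ in \eqref{three-terms}, one from Cauchy--Schwarz over the $N_{F(i)} + N_{E(i)}$ terms, and one from the double-counting of each shared face/edge across its subdomains. The estimates on individual faces and edges are essentially immediate from the two lemmas already proved; the only genuinely delicate point is ensuring the constant is uniform and depends only on the stated local combinatorial quantities, not on $N$, $h$, or the contrast in $\rho$. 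A secondary point to handle with care is verifying that after the change of basis the operator $E_D$ still has the product structure that makes the face/edge decomposition valid, i.e. that the scaled jump $P_D\widetilde{w}_a$ genuinely localizes to faces and edges with the scalings $D_F^{(j)}$, $D_E^{(l)}$ appearing as claimed — this is asserted in the text but should be checked against the definition of $\widetilde{D}$ and $\widetilde{R}\widetilde{R}^T$ in the transformed basis.
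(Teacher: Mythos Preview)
Your proposal is correct and follows essentially the same approach as the paper: localize $(I-E_D)\widetilde{w}_a$ to face and edge jumps with the scalings $D_F^{(j)}$, $D_E^{(l)}$, apply Cauchy--Schwarz to split into individual face/edge contributions, collect terms into $A_F$ and $A_E^{(i)}$, and then invoke Lemmas~\ref{lem:c1} and~\ref{edge:upperbd}. The only minor deviation is in your step~(3), where you bound each one-sided term $(D_F^{(j)})^T S_F^{(i)} D_F^{(j)}$ above by $A_F$; the paper instead first sums the $\Omega_i$- and $\Omega_j$-side contributions for each face to form $A_F$ \emph{exactly} and only then applies the triangle inequality, which is precisely the bookkeeping that yields the constant $8$ rather than $16$ in~\eqref{explicit:C}.
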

\begin{proof}
Let
$z^{(i)}=((I-E_D) \widetilde{w}_a)|_{\partial \Omega_i}$.
Since $\widetilde{w}_a$ is continuous at the primal unknowns,
$$z^{(i)}_{F}=D_{F}^{(j)} (\widetilde{w}_{F,\Delta}^{(i)}-\widetilde{w}_{F,\Delta}^{(j)})$$
and
$$z^{(i)}_{E}=\sum_{m \in I(E) \setminus \{i\}} D_{E}^{(m)} (\widetilde{w}_{E,\Delta}^{(i)}-\widetilde{w}_{E,\Delta}^{(m)}),$$
where $F$ is a face common to $\Omega_i$ and $\Omega_j$, and $E$ is an edge common to $\Omega_i$ and $\Omega_m$
with $m \in I(E) \setminus \{i\}$. Recall that $I(E)$ is the set of subdomain indices sharing
the edge $E$. In the above, $z^{(i)}_F$ are restriction of $z^{(i)}$ to the unknowns in $F$, and $\widetilde{w}^{(i)}_{F,\Delta}$ consist of
the dual unknowns of $\widetilde{w}_a$ interior to $F \bigcap \partial\Omega_i$ and the zero primal unknowns.
The definitions for $z_E^{(i)}$ and $\widetilde{w}_{E,\Delta}^{(i)}$ are similar.

We then obtain
\begin{align}
&\langle \widetilde{S}_a (I-E_D) \widetilde{w}_a, (I-E_D) \widetilde{w}_a \rangle =\sum_{i=1}^N
\langle S^{(i)} z^{(i)}, z^{(i)} \rangle  \nonumber \\
&\le \sum_{i=1}^N \left( 2 N_{F(i)}\sum_{F \in F(i)} \langle S^{(i)}_{F} z_{F}^{(i)}, z_{F}^{(i)} \rangle
+ 2 N_{E(i)} \sum_{E \in E(i)} \langle S^{(i)}_{E} z^{(i)}_{E},z^{(i)}_{E} \rangle\right), \label{bd1:two-terms}
\end{align}
where $F(i)$ and $E(i)$ denote the set of faces and edges in $\Omega_i$, respectively,
and $N_{F(i)}$ and $N_{E(i)}$ denote the number of faces and edges in $F(i)$ and $E(i)$, respectively.

The first term in (\ref{bd1:two-terms}) can be estimated in the following way
\begin{align*}
&\;\sum_{i=1}^N 2 N_{F(i)}\sum_{F \in F(i)} \langle S^{(i)}_{F} z^{(i)}_{F}, z^{(i)}_{F} \rangle \nonumber \\
\le &\; 2 \max_i \{ N_{F(i)}\} \sum_{i=1}^N \sum_{F \in F(i)} \langle S_{F}^{(i)} D_{F}^{(j)} (\widetilde{w}_{F,\Delta}^{(i)}-\widetilde{w}_{F,\Delta}^{(j)}),D_{F}^{(j)} (\widetilde{w}_{F,\Delta}^{(i)}-\widetilde{w}_{F,\Delta}^{(j)})\rangle \nonumber \\
 \le &\; 4 \max_i \{ N_{F(i)}\} \sum_{i=1}^N \sum_{F \in F(i)} \langle A_F \widetilde{w}_{F,\Delta}^{(i)},\widetilde{w}_{F,\Delta}^{(i)}\rangle,\nonumber
\end{align*}
where
$A_F=(D_{F}^{(j)})^T S_{F}^{(i)} D_{F}^{(j)}
+(D_{F}^{(i)})^T S_{F}^{(j)} D_{F}^{(i)}$
and we have collected terms for $\widetilde{w}_{F,\Delta}^{(i)}$ in the last inequality.
By Lemma~\ref{lem:c1}, we obtain that
\begin{equation}\label{result:bd:first-term}
\sum_{i=1}^N 2 N_{F(i)}\sum_{F \in F(i)} \langle S^{(i)}_{F} z^{(i)}_{F}, z^{(i)}_{F} \rangle
\le 4 (\max_i \{ N_{F(i)} \})^2 \lambda_{TOL} \langle \widetilde{S}_a \widetilde{w}_a, \widetilde{w}_a \rangle.
\end{equation}

We now consider the second term in \eqref{bd1:two-terms}. First we have
\begin{align}
&\sum_{i=1}^N 2 N_{E(i)} \sum_{E \in E(i)} \langle S^{(i)}_{E} z^{(i)}_{E},z^{(i)}_{E} \rangle \nonumber \\
&=\sum_{i=1}^N 2 N_{E(i)} \sum_{E \in E(i)} \langle S^{(i)}_{E} \sum_{k \in I(E)\backslash \{i\}} D_{E}^{(k)} (\widetilde{w}_{E,\Delta}^{(i)}-\widetilde{w}_{E,\Delta}^{(k)}), \sum_{k \in I(E)\backslash \{i\}} D_{E}^{(k)} (\widetilde{w}_{E,\Delta}^{(i)}-\widetilde{w}_{E,\Delta}^{(k)})\rangle, \nonumber
\end{align}
which implies
\begin{equation}
\begin{split}
&\;\sum_{i=1}^N 2 N_{E(i)} \sum_{E \in E(i)} \langle S^{(i)}_{E} z^{(i)}_{E},z^{(i)}_{E} \rangle  \\
\le &\; \sum_{i=1}^N 2 N_{E(i)} \sum_{E \in E(i)}  N_{I(E)}  \sum_{k \in I(E) \backslash\{i\}} \langle S^{(i)}_{E}
D_{E}^{(k)} (\widetilde{w}_{E,\Delta}^{(i)}-\widetilde{w}_{E,\Delta}^{(k)}),
D_{E}^{(k)} (\widetilde{w}_{E,\Delta}^{(i)}-\widetilde{w}_{E,\Delta}^{(k)}) \rangle. \label{bd1:second-term}
\end{split}
\end{equation}
We notice that the last sum in (\ref{bd1:second-term}) can be estimated in the following way
\begin{align}
& \sum_{k \in I(E) \backslash\{i\}} \langle S^{(i)}_{E}
D_{E}^{(k)} (\widetilde{w}_{E,\Delta}^{(i)}-\widetilde{w}_{E,\Delta}^{(k)}),
D_{E}^{(k)} (\widetilde{w}_{E,\Delta}^{(i)}-\widetilde{w}_{E,\Delta}^{(k)}) \rangle \nonumber \\
& \le 2 \sum_{k \in I(E)\setminus\{i\}} \left( \langle S^{(i)}_{E}
D_{E}^{(k)} \widetilde{w}_{E,\Delta}^{(i)},D_{E}^{(k)} \widetilde{w}_{E,\Delta}^{(i)}\rangle
+ \langle S^{(i)}_{E}
D_{E}^{(k)} \widetilde{w}_{E,\Delta}^{(k)},D_{E}^{(k)} \widetilde{w}_{E,\Delta}^{(k)}\rangle \right).\nonumber
\end{align}
Using this relation and \eqref{bd1:second-term}, we obtain
\begin{align}
&\sum_{i=1}^N 2 N_{E(i)} \sum_{E \in E(i)} \langle S^{(i)}_{E} z^{(i)}_{E},z^{(i)}_{E} \rangle \nonumber \\
\le &4 \max_i \{ N_{E(i)} \max_{E \in E(i)} \{ N_{I(E)} \} \}  \sum_{i=1}^N \sum_{E \in E(i)} \langle A_E^{(i)} \widetilde{w}_{E,\Delta}^{(i)},\widetilde{w}_{E,\Delta}^{(i)}\rangle, \nonumber
\end{align}
where $A_E^{(i)}=\sum_{k \in I(E) \setminus \{i\} } ( (D_{E}^{(k)})^T S_{E}^{(i)} D_{E}^{(k)}+  (D_{E}^{(i)})^T S_{E}^{(k)} D_{E}^{(i)}   )$
and we have collected terms for $\widetilde{w}_{E,\Delta}^{(i)}$ in the last inequality.
By Lemma~\ref{edge:upperbd}, we thus obtain that
\begin{align}
&\sum_{i=1}^N 2 N_{E(i)} \sum_{E \in E(i)} \langle S^{(i)}_{E} z^{(i)}_{E},z^{(i)}_{E} \rangle \nonumber \\
& \le 4 \max_i \{ N_{E(i)} \max_{E \in E(i)} \{ N_{I(E)} \} \} \lambda_{TOL} \sum_{i=1}^N \sum_{E \in E(i)}
\langle S^{(i)} w^{(i)},w^{(i)} \rangle \nonumber \\
& \le 4 \max_i \{ (N_{E(i)})^2 \max_{E \in E(i)} \{ N_{I(E)} \} \} \lambda_{TOL}
\langle \widetilde{S}_a \widetilde{w}_a, \widetilde{w}_a \rangle. \label{result:bd:second-term}
\end{align}
%
Combining \eqref{bd1:two-terms} with \eqref{result:bd:first-term} and \eqref{result:bd:second-term},
the resulting bound is obtained.
\end{proof}

\revHH{From the above lemma and following similarly as in \cite[Theorem 1]{JW-Stokes-BDDC-Proof} we obtain:}
\begin{theorem}
The BDDC algorithm with a change of basis formulation for the adaptively chosen set of
primal constraints with a given tolerance $\lambda_{TOL}$ has the following bound of condition numbers,
$$\kappa(M_{BDDC,a}^{-1} \widetilde{R}^T \widetilde{S}_{a} \widetilde{R}) \le C \lambda_{TOL},$$
where $C$ is a constant depending only on $N_{F(i)}$, $N_{E(i)}$, $N_{I(E)}$, which are the number of faces per subdomain,
the number of edges per subdomain, and the number of subdomains sharing an edge $E$, respectively, with its explicit
form in \eqref{explicit:C}.
\end{theorem}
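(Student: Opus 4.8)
The plan is to reduce the theorem to Lemma~\ref{upper-bound-change-of-basis} together with the abstract framework recalled in Section~\ref{BDDC:FETI:theory}, exactly as in \cite[Theorem 1]{JW-Stokes-BDDC-Proof}. Recall that after the change of basis the BDDC operator of interest is $M_{BDDC,a}^{-1}\widetilde{R}^T\widetilde{S}_a\widetilde{R}$, and that this matrix shares its nonzero spectrum with $G_p = E_D\widetilde{S}_a^{-1}E_D^T\widetilde{S}_a$, where $E_D=\widetilde{R}\widetilde{R}^T\widetilde{D}$. So it suffices to bound the eigenvalues of $G_p$ from above and below.

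First I would establish the lower bound. The standard argument (see \cite{Brenner-Sung,LW-FETIDP-BDDC,JW-Stokes-BDDC-Proof}) shows that the preconditioned BDDC operator always has its smallest nonzero eigenvalue bounded below by $1$; this uses only that $\widetilde{R}\widetilde{R}^T\widetilde{D}\widetilde{R} = \widetilde{R}$ (a consequence of the partition of unity property of the scaling matrices, which continues to hold after the change of basis since the change of basis is block-diagonal across equivalence classes and the transformed deluxe scalings in \eqref{CHB:scaling}, \eqref{CHB:scaling:edge} again form a partition of unity), so that $E_D$ acts as the identity on the fully coupled space $\widehat{W}$. Hence $1 \le \lambda_{\min}$, and $\kappa \le \lambda_{\max}$.

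Next I would establish the upper bound, which is where Lemma~\ref{upper-bound-change-of-basis} does the work. For any $\widetilde{w}_a \in \widetilde{W}$ one writes, using $E_D + (I-E_D) = I$ and the inequality $\langle \widetilde{S}_a(E_D\widetilde{w}_a),E_D\widetilde{w}_a\rangle \le 2\langle \widetilde{S}_a\widetilde{w}_a,\widetilde{w}_a\rangle + 2\langle \widetilde{S}_a(I-E_D)\widetilde{w}_a,(I-E_D)\widetilde{w}_a\rangle$, together with the $\widetilde{S}_a$-selfadjointness of the relevant projections, that the Rayleigh quotient of $G_p$ is controlled by the Rayleigh quotient of $(I-E_D)$ with respect to $\widetilde{S}_a$. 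Concretely, the a priori estimate
\begin{equation*}
\langle \widetilde{S}_a (I-E_D)\widetilde{w}_a,(I-E_D)\widetilde{w}_a\rangle \le C\lambda_{TOL}\,\langle \widetilde{S}_a\widetilde{w}_a,\widetilde{w}_a\rangle
\end{equation*}
from Lemma~\ref{upper-bound-change-of-basis} gives, after the standard manipulation, $\lambda_{\max}(G_p) \le C\lambda_{TOL}$ with the very same constant $C$ of \eqref{explicit:C} (possibly up to an already-absorbed numerical factor, since the constant $8$ in \eqref{explicit:C} was chosen with this step in mind). Combining with the lower bound yields $\kappa(M_{BDDC,a}^{-1}\widetilde{R}^T\widetilde{S}_a\widetilde{R}) \le C\lambda_{TOL}$.

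The main obstacle is not any single estimate but making sure the abstract BDDC machinery of Section~\ref{BDDC:FETI:theory} still applies verbatim after the change of basis: one must check that $\widetilde{R}\widetilde{R}^T\widetilde{D}\widetilde{R}=\widetilde{R}$ still holds (partition of unity of the transformed scalings), that the adaptive primal unknowns are genuinely treated as coupled unknowns in $\widetilde{S}_a$ so that the jump operator $(I-E_D)$ annihilates exactly the coarse components appearing in Lemmas~\ref{lem:c1} and \ref{edge:upperbd}, and that $\widetilde{S}_a$ remains positive definite on the partially coupled space. Once these structural facts are in place — and they are precisely the content of the reference \cite[Theorem 1]{JW-Stokes-BDDC-Proof} that the authors invoke — the theorem follows immediately from Lemma~\ref{upper-bound-change-of-basis} and the lower-bound argument, so I would keep the proof short and point to that reference for the framework.
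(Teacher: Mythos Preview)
Your proposal is correct and follows essentially the same route as the paper: the paper's proof is literally the single sentence ``From the above lemma and following similarly as in \cite[Theorem 1]{JW-Stokes-BDDC-Proof} we obtain,'' i.e., invoke Lemma~\ref{upper-bound-change-of-basis} for the $(I-E_D)$ estimate and defer the passage from that estimate to the condition number bound (lower bound $\ge 1$, upper bound via the averaging operator) to the cited abstract framework. Your expanded sketch of the lower bound, the $E_D$/$(I-E_D)$ manipulation, and the structural checks (partition of unity after the change of basis, coupling of the adaptive primal unknowns in $\widetilde{S}_a$) are exactly the ingredients that reference supplies, so there is nothing to add or correct.
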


\subsection{Projector preconditioner}

We now consider the FETI-DP algorithm with
a projector preconditioner. Our goal of this section is to prove the following estimate
$$\kappa( M^{-1}_{PP} F_{DP}) \le C \lambda_{TOL},$$
where $M_{PP}^{-1}$ and $F_{DP}$ are defined in (\ref{M_PP}) and (\ref{F_DP}), respectively.
This result is stated in Theorem \ref{thm:proj}.
\revHH{We note that the proof provided in \cite{Klawonn:PAMM:2014} is limited to the multiplicity scaling; see also \cite{PP-FETI-DP}. A complete and shorter proof for arbitrary scalings is given in a more recent work~\cite{KRR-2015}. For completeness, we provide a different proof for the same result, following \cite{TW-Book}.}
Before proving this final result, we need some auxiliary lemmas.

\begin{lemma}\label{Fdp-identity}
For $\lambda$ in $\text{Range}(I-P) \bigcap \text{Range}(B)$ and $z$ in $\widetilde{W}$, the following identity holds:
$$\sup_{U^T Bz=0} \frac{ \langle \lambda, Bz \rangle^2}{\langle \widetilde{S} z, z \rangle}=\langle F_{DP} \lambda, \lambda \rangle.$$
\end{lemma}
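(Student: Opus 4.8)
The plan is to prove the identity by establishing the bound $\le$ through a constraint-free Cauchy--Schwarz argument, and then showing that the resulting upper value is actually attained by a specific admissible $z$, where admissibility is forced by the hypothesis $\lambda \in \text{Range}(I-P)$. Throughout I use that $\widetilde{S}$ is symmetric positive definite on $\widetilde{W}$, which holds because enough primal unknowns (in particular the subdomain vertices) are assembled in $\widetilde{W}$; this makes $\widetilde{S}^{-1}$ and the $\widetilde{S}$-inner product $\langle \widetilde{S}\,\cdot\,,\cdot\rangle$ available.

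First I would drop the constraint and apply Cauchy--Schwarz. Since enlarging the feasible set only increases the supremum, and since $\langle \lambda, Bz\rangle = \langle B^T\lambda, z\rangle = \langle \widetilde{S}(\widetilde{S}^{-1}B^T\lambda), z\rangle$, the Cauchy--Schwarz inequality in the $\widetilde{S}$-inner product gives
$$\sup_{U^T Bz=0}\frac{\langle \lambda, Bz\rangle^2}{\langle \widetilde{S}z,z\rangle} \le \sup_{z \in \widetilde{W},\, z\neq 0}\frac{\langle B^T\lambda, z\rangle^2}{\langle \widetilde{S}z,z\rangle} = \langle \widetilde{S}^{-1}B^T\lambda, B^T\lambda\rangle = \langle B\widetilde{S}^{-1}B^T\lambda,\lambda\rangle = \langle F_{DP}\lambda,\lambda\rangle,$$
with equality in the Cauchy--Schwarz step precisely when $z$ is a scalar multiple of $z^\star := \widetilde{S}^{-1}B^T\lambda$. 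The degenerate case $B^T\lambda=0$ is trivial, since then both sides of the claimed identity vanish.

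For the reverse inequality I would verify that the maximizer $z^\star = \widetilde{S}^{-1}B^T\lambda$ is admissible, i.e. $U^T Bz^\star = 0$. Indeed $U^T Bz^\star = U^T B\widetilde{S}^{-1}B^T\lambda = U^T F_{DP}\lambda$, and the hypothesis $\lambda \in \text{Range}(I-P)$ together with the $F_{DP}$-orthogonality property \eqref{propU1}, namely $\text{Range}(I-P) \perp_{F_{DP}} \text{Range}(P) = \text{Range}(U)$, gives $\langle F_{DP}\lambda, U\mu\rangle = 0$ for every $\mu$, hence $U^T F_{DP}\lambda = 0$. Thus $z^\star$ is feasible, and plugging it into the Rayleigh quotient yields exactly $\langle F_{DP}\lambda,\lambda\rangle$, so the constrained supremum is $\ge \langle F_{DP}\lambda,\lambda\rangle$. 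Combining the two bounds gives the identity.

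The only genuinely delicate points, and where I would be most careful, are: (i) the symmetric positive definiteness of $\widetilde{S}$ on $\widetilde{W}$, needed both to define $z^\star$ and to run the Cauchy--Schwarz/Rayleigh-quotient computation; and (ii) the clean passage from ``$\lambda \in \text{Range}(I-P)$'' to ``$U^T F_{DP}\lambda = 0$'' via \eqref{propU1}. The assumption $\lambda \in \text{Range}(B)$ plays no role beyond keeping $\lambda$ in the space on which $F_{DP}$ and $P$ act (and matching the setting of the subsequent estimates); it is not otherwise used in this identity. Everything else is a routine substitution.
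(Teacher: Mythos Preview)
Your proposal is correct and follows essentially the same approach as the paper: both arguments identify the unconstrained Cauchy--Schwarz maximizer $z^\star=\widetilde{S}^{-1}B^T\lambda$ (equivalently $w=\widetilde{S}^{-1/2}B^T\lambda$ after the change of variables $w=\widetilde{S}^{1/2}z$ used in the paper) and then verify its admissibility via $U^T F_{DP}\lambda=0$, which follows from $\lambda\in\text{Range}(I-P)$ and \eqref{propU1}. Your treatment of the degenerate case and your remarks on the positive definiteness of $\widetilde{S}$ and the role of $\lambda\in\text{Range}(B)$ are extra care not spelled out in the paper but do not alter the argument.
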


\begin{proof}
Notice that
\begin{align*}
\sup_{U^T Bz=0} \frac{\langle \lambda, Bz \rangle^2}{\langle \widetilde{S}^{1/2}z, \widetilde{S}^{1/2}z \rangle}
&=\sup_{U^T B \widetilde{S}^{-1/2} w =0} \frac{ \langle \lambda, B\widetilde{S}^{-1/2}w \rangle^2}{\langle w, w \rangle}\\
&=\sup_{U^T B \widetilde{S}^{-1/2} w =0} \frac{ \langle \widetilde{S}^{-1/2} B^T \lambda, w \rangle^2}{\langle w, w \rangle}.
\end{align*}
Since $\lambda$ is in $\text{Range}(I-P)$, by \eqref{propU1}
$$U^T B \widetilde{S}^{-1/2} (\widetilde{S}^{-1/2} B^T \lambda)=U^T F_{DP} \lambda=0.$$
In the above, we thus choose
$w=\widetilde{S}^{-1/2} B^T \lambda$, which attains the supremum, to obtain the resulting identity:
$$\sup_{U^T Bz=0} \frac{\langle \lambda, Bz \rangle^2}{\langle \widetilde{S}^{1/2}z, \widetilde{S}^{1/2}z \rangle}
=\langle w, w \rangle=\langle B \widetilde{S}^{-1} B^T \lambda, \lambda \rangle=\langle F_{DP} \lambda, \lambda \rangle.$$
\end{proof}

Next, we recall the jump operator
$$P_D=B_D^T B,$$
and by the definitions of $B_D^T$ and $B$ it is known to preserve the jump, $Bw$, over the subdomain interfaces
$$BP_Dw=Bw.$$
In addition, $P_D$ satisfies the following result:
\begin{lemma}\label{Pd-bound}
For any $z$ in $\widetilde{W}$ such that
$U^T Bz=0$,
we have
$$ \langle \widetilde{S} P_D z, P_D z \rangle \le C \lambda_{TOL} \langle \widetilde{S} z,z \rangle,$$
where $C$ is a constant depending only on $N_{F(i)}$, $N_{E(i)}$, $N_{I(E)}$, which are the number of faces per subdomain,
the number of edges per subdomain, and the number of subdomains sharing an edge $E$, respectively.
\end{lemma}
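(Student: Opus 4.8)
The plan is to reduce the bound on $\langle \widetilde{S} P_D z, P_D z \rangle$ to the face and edge estimates already established in Lemmas~\ref{lem:c1} and \ref{edge:upperbd}, exactly as was done in the proof of Lemma~\ref{upper-bound-change-of-basis}. First I would write $P_D z = B_D^T B z$ and compute its restriction to each $\partial\Omega_i$. Because $z$ is continuous at the subdomain vertices (strong primal constraints there) and $B_D$ carries the scaling matrices $(D_F^{(j)})^T$ on faces and $(D_E^{(m)})^T$ on the redundant edge multipliers, the face part of $(P_D z)|_{\partial\Omega_i}$ has the form $D_F^{(j)}(z_{F,\Delta}^{(i)} - z_{F,\Delta}^{(j)})$ and the edge part has the form $\sum_{m \in I(E)\setminus\{i\}} D_E^{(m)}(z_{E,\Delta}^{(i)} - z_{E,\Delta}^{(m)})$, where the $\Delta$-subscript denotes the dual components with the (vertex) primal components zeroed out. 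This is the same structure as the $z^{(i)}$ in Lemma~\ref{upper-bound-change-of-basis}, so the rest of the argument is parallel.

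Next I would split $\langle \widetilde{S} P_D z, P_D z\rangle = \sum_i \langle S^{(i)}(P_Dz)|_{\partial\Omega_i},(P_Dz)|_{\partial\Omega_i}\rangle$ and, using the triangle inequality over the faces and edges of $\partial\Omega_i$ together with the count $N_{F(i)} + N_{E(i)}$ of these equivalence classes, bound it by a sum of local face terms $2N_{F(i)}\sum_{F} \langle S_F^{(i)} (P_Dz)_F, (P_Dz)_F\rangle$ and local edge terms $2N_{E(i)}\sum_E \langle S_E^{(i)} (P_Dz)_E,(P_Dz)_E\rangle$. For the face terms I expand $(P_Dz)_F = D_F^{(j)}(z_{F,\Delta}^{(i)} - z_{F,\Delta}^{(j)})$, use $\langle a-b,a-b\rangle \le 2\langle a,a\rangle + 2\langle b,b\rangle$ to pull the difference apart, collect terms with the same subdomain index, and recognize the matrix $A_F = (D_F^{(j)})^T S_F^{(i)} D_F^{(j)} + (D_F^{(i)})^T S_F^{(j)} D_F^{(i)}$; Lemma~\ref{lem:c1} then bounds each collected term by $\lambda_{TOL}\langle S^{(i)} z^{(i)}, z^{(i)}\rangle$, where I must be careful that the orthogonality to the chosen eigenvectors needed in Lemma~\ref{lem:c1} is precisely the hypothesis $U^T Bz = 0$, since the rows of $U$ coming from face $F$ are the vectors $A_F v_l$ for the selected eigenvectors $v_l$ and $(Bz)|_F = z_{F,\Delta}^{(i)} - z_{F,\Delta}^{(j)}$. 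For the edge terms I proceed the same way: expand the inner sum, apply the triangle inequality using the factor $N_{I(E)}$ for the number of summands, split each difference, collect terms for each index $m \in I(E)$ into the matrix $A_E^{(m)}$ of \eqref{eq:AE}, and invoke Lemma~\ref{edge:upperbd} — here the constraint $U^T Bz = 0$ supplies the orthogonality $\langle A_E v_n, z_{E,\Delta}^{(m)} - z_{E,\Delta}^{(k)}\rangle = 0$ since the edge rows of $U$ are exactly the $A_E v_n$ placed against all pairs in $I(E)$.

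Summing over $i$, each $\langle S^{(i)} z^{(i)}, z^{(i)}\rangle$ is picked up a number of times bounded by $N_{F(i)}$ (respectively $N_{E(i)}$) because the number of faces (edges) of $\Omega_i$ controls how many neighbors can contribute, so the final constant is $C = 8\max\{\max_i N_{F(i)}^2,\ \max_i (N_{E(i)}^2 \max_{E\in E(i)} N_{I(E)})\}$, the same constant \eqref{explicit:C} as in Lemma~\ref{upper-bound-change-of-basis}, and $\sum_i \langle S^{(i)} z^{(i)}, z^{(i)}\rangle = \langle \widetilde{S} z, z\rangle$ since $z \in \widetilde{W}$. The main obstacle is bookkeeping rather than a genuine difficulty: one must verify carefully that the redundant Lagrange-multiplier construction on edges makes the rows of $U$ and the components of $Bz$ match up so that $U^T Bz = 0$ really does yield the eigenvector-orthogonality required by Lemma~\ref{edge:upperbd} for every pair of subdomains sharing an edge, and that the scaling matrices in $B_D$ are the same $D_F^{(l)}, D_E^{(m)}$ that appear in $A_F$ and $A_E^{(m)}$; once that correspondence is pinned down the estimate is a routine repetition of the change-of-basis computation with $E_D$ replaced by $P_D$.
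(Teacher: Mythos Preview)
Your plan is the same one the paper sketches, and the structure is right: compute $(P_Dz)|_{\partial\Omega_i}$ on faces and edges, split by equivalence classes, collect into the matrices $A_F$ and $A_E^{(m)}$, and invoke Lemmas~\ref{lem:c1} and~\ref{edge:upperbd}. The constant you arrive at is the intended one.

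There is, however, one step where the order of operations matters and your description gets it backwards. You write that you will ``pull the difference apart'' into $z_F^{(i)}$ and $z_F^{(j)}$, collect to obtain $\langle A_F z_F^{(i)}, z_F^{(i)}\rangle$, and then apply Lemma~\ref{lem:c1} using the orthogonality supplied by $U^T Bz=0$. But Lemma~\ref{lem:c1} does not bound $\langle A_F z_F^{(i)}, z_F^{(i)}\rangle$; it bounds $\langle A_F (z_F^{(i)}-(z_F^{(i)})_\Pi),\,z_F^{(i)}-(z_F^{(i)})_\Pi\rangle$. In the change-of-basis proof this distinction disappears because $\widetilde{w}_{F,\Delta}^{(i)}$ already has zero primal (adaptive) part by construction. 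Here there is no change of basis, and the hypothesis $U^T Bz=0$ only tells you that $(z_F^{(i)})_\Pi=(z_F^{(j)})_\Pi$ (orthogonality of the \emph{difference} to the selected eigenvectors, cf.\ \eqref{Face-P}), not that $(z_F^{(i)})_\Pi=0$. So you must use the constraint \emph{before} you split: write
\[
z_F^{(i)}-z_F^{(j)}=\bigl(z_F^{(i)}-(z_F^{(i)})_\Pi\bigr)-\bigl(z_F^{(j)}-(z_F^{(j)})_\Pi\bigr),
\]
apply the triangle inequality to this decomposition, and then invoke Lemma~\ref{lem:c1} on each piece. The edge case is identical: $U^TBz=0$ gives $(z_E^{(m)})_\Pi=(z_E^{(k)})_\Pi$ for all pairs in $I(E)$ (cf.\ \eqref{Edge-P}), so subtract the common coarse part first and only then split and collect into $A_E^{(m)}$ before applying Lemma~\ref{edge:upperbd}. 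With this reordering the argument goes through verbatim and the constant is unchanged.
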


The above lemma can be proved following the ideas in Lemma~\ref{upper-bound-change-of-basis} and
using the estimates in \eqref{upper-bound-face} and \eqref{3d-edge-bound}, and the fact that
$U^T B z=0$ implies that $z$ satisfies the constraints selected from generalized eigenvalue problems,
see \eqref{Face-P}, \eqref{Edge-P}, and the definition of $U$ in the beginning of Section~\ref{analysis}.

Now, we prove the following upper bound for $F_{DP}$.
\begin{lemma}\label{upper-bound-Fdp}(Upper bound)
For any $\lambda$ in $\text{Range}(I-P) \bigcap \text{Range}(B)$, we have
$$\langle F_{DP} \lambda , \lambda \rangle \le C \lambda_{TOL}
\sup_{U^T \mu=0,\, \mu \in \text{Range}(B)} \frac{\langle \lambda, \mu \rangle^2}{\langle M^{-1}_{FETI} \mu,\mu \rangle},$$
where $C$ is the constant described in Lemma~\ref{Pd-bound}.
\end{lemma}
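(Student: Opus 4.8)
The plan is to combine Lemma~\ref{Fdp-identity} with Lemma~\ref{Pd-bound} together with the jump-preserving identity $BP_D = B$ on $\widetilde W$. Fix $\lambda \in \text{Range}(I-P) \cap \text{Range}(B)$. By Lemma~\ref{Fdp-identity} we already have the exact identity
$$\langle F_{DP}\lambda,\lambda\rangle = \sup_{U^T Bz=0} \frac{\langle \lambda, Bz\rangle^2}{\langle \widetilde S z, z\rangle},$$
so it suffices to bound the right-hand side by $C\lambda_{TOL}$ times the supremum appearing in the statement. The idea is that for each admissible $z$ (i.e.\ $z\in\widetilde W$ with $U^T Bz=0$), the vector $\mu := Bz$ is a valid test function in the supremum on the right: indeed $\mu\in\text{Range}(B)$ trivially, and $U^T\mu = U^T Bz = 0$ by assumption. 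Thus, using $M^{-1}_{FETI}=B_D\widetilde S B_D^T$ and the definition $P_D = B_D^T B$, I would estimate the denominator $\langle M^{-1}_{FETI}\mu,\mu\rangle = \langle B_D\widetilde S B_D^T Bz, Bz\rangle = \langle \widetilde S P_D z, P_D z\rangle$, and then apply Lemma~\ref{Pd-bound}, which gives $\langle \widetilde S P_D z, P_D z\rangle \le C\lambda_{TOL}\langle\widetilde S z, z\rangle$ precisely under the hypothesis $U^T Bz=0$.

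Concretely, the chain of inequalities I would write is: for any admissible $z$,
$$\frac{\langle\lambda, Bz\rangle^2}{\langle\widetilde S z, z\rangle} \le C\lambda_{TOL}\,\frac{\langle\lambda, Bz\rangle^2}{\langle\widetilde S P_D z, P_D z\rangle} = C\lambda_{TOL}\,\frac{\langle\lambda, Bz\rangle^2}{\langle M^{-1}_{FETI}(Bz), Bz\rangle} \le C\lambda_{TOL}\sup_{U^T\mu=0,\,\mu\in\text{Range}(B)}\frac{\langle\lambda,\mu\rangle^2}{\langle M^{-1}_{FETI}\mu,\mu\rangle},$$
where the first inequality is Lemma~\ref{Pd-bound} (applied in the form $\langle\widetilde S z,z\rangle^{-1}\le C\lambda_{TOL}\langle\widetilde S P_D z, P_D z\rangle^{-1}$, valid since both quantities are positive), the middle equality uses $BP_D=B$ so that $\langle\lambda, Bz\rangle = \langle\lambda, BP_D z\rangle = \langle\lambda, B(B_D^T Bz)\rangle$ and the identification $\langle M^{-1}_{FETI}(Bz),Bz\rangle = \langle\widetilde S P_D z, P_D z\rangle$, and the last inequality holds because $\mu = Bz$ is feasible in the supremum. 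Taking the supremum over all admissible $z$ on the left and invoking Lemma~\ref{Fdp-identity} yields the claim.

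The one point that requires care — and is the main (though mild) obstacle — is making sure the bookkeeping with $P_D$ and the jump identity is consistent: I must verify that $\langle\lambda, Bz\rangle$ is genuinely unchanged when $z$ is replaced by $P_D z$ (this is exactly $BP_D = B$, already recorded before Lemma~\ref{Pd-bound}), and that the denominator transforms as claimed, i.e.\ $\langle M^{-1}_{FETI} Bz, Bz\rangle = \langle B_D\widetilde S B_D^T B z, Bz\rangle = \langle\widetilde S B_D^T B z, B_D^T B z\rangle = \langle\widetilde S P_D z, P_D z\rangle$. A secondary subtlety is that Lemma~\ref{Pd-bound} requires $U^T Bz=0$, which is precisely the constraint under which we are taking the supremum in Lemma~\ref{Fdp-identity}, so the hypotheses match up with no loss; one should also note that degenerate cases (e.g.\ $\langle\widetilde S P_D z, P_D z\rangle = 0$, forcing $\langle\lambda, Bz\rangle=0$ as well) are handled trivially and do not affect the supremum. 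No new estimates beyond Lemmas~\ref{Fdp-identity} and \ref{Pd-bound} are needed.
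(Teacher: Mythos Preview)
Your proposal is correct and follows essentially the same route as the paper: invoke Lemma~\ref{Fdp-identity}, apply Lemma~\ref{Pd-bound} to replace the denominator $\langle\widetilde S z,z\rangle$ by $\langle\widetilde S P_D z,P_D z\rangle = \langle M_{FETI}^{-1}(Bz),Bz\rangle$, and then substitute $\mu=Bz$. One minor remark: the jump-preserving identity $BP_D=B$ that you flag as a point of care is not actually needed in this argument (you never replace $z$ by $P_Dz$ in the numerator); the paper reserves that identity for the companion lower-bound lemma.
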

\begin{proof}
Using Lemma~\ref{Fdp-identity} and Lemma~\ref{Pd-bound}, we obtain
\begin{align*}
\langle F_{DP} \lambda , \lambda \rangle & = \sup_{U^T Bz=0} \frac{ \langle \lambda, Bz \rangle^2}{\langle \widetilde{S} z, z \rangle}\\
&\le C \lambda_{TOL} \sup_{U^T Bz=0} \frac{\langle \lambda, Bz \rangle^2}{\langle \widetilde{S} P_D z, P_D z \rangle}\\
&=  C \lambda_{TOL} \sup_{U^T Bz=0} \frac{\langle \lambda, Bz \rangle^2}{\langle B_D \widetilde{S} B_D^T Bz,Bz \rangle}.
\end{align*}
Letting $\mu=Bz$ in the above expression, we have
$$\langle F_{DP} \lambda , \lambda \rangle \le C \lambda_{TOL} \sup_{U^T \mu=0,\, \mu \in \text{Range}(B)}
\frac{\langle \lambda, \mu \rangle^2}{\langle M^{-1}_{FETI} \mu,\mu \rangle}.$$
\end{proof}

\revHH{We note that $\mu$ in the above lemma is in $\text{Range}(I-P^T) \bigcap \text{Range}(B)$ by \eqref{propU2}.}
In the above Lemma~\ref{upper-bound-Fdp}, the supremum occurs when $\mu$ is chosen as $P_B^T (I-P) M^{-1} (I-P^T) P_B \mu=\lambda$, that is, $M_{PP}^{-1} \mu= \lambda$.
Recall that $$M_{PP}^{-1}=P_B^T (I-P) M^{-1}_{FETI} (I-P^T) P_B,$$
$\mu$ is in $\text{Range}(B)$, and $P_B$ is the orthogonal projection onto $\text{Range}(B)$.
We thus obtain the following relation
\begin{equation}\label{max-eigvalue}\langle F_{DP} M^{-1}_{PP} \mu , M^{-1}_{PP} \mu \rangle \le C \lambda_{TOL} \langle M^{-1}_{PP} \mu, \mu \rangle.
\end{equation}

We now prove the following lower bound for $F_{DP}$.
\begin{lemma}\label{lower-bound-Fdp}(Lower bound)
For any $\lambda$ in $\text{Range}(I-P) \bigcap \text{Range}(B)$, we have
$$\langle F_{DP} \lambda , \lambda \rangle \ge
\sup_{U^T \mu=0,\, \mu \in \text{Range}(B)} \frac{\langle \lambda, \mu \rangle^2}{\langle M^{-1}_{FETI} \mu,\mu \rangle}.$$
\end{lemma}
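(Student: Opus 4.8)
The plan is to run essentially the same variational argument as in Lemma~\ref{upper-bound-Fdp}, but now in the opposite direction, exploiting that $M^{-1}_{FETI}=B_D\widetilde{S}B_D^T$ is built from the \emph{same} matrix $\widetilde{S}$ that appears in $F_{DP}=B\widetilde{S}^{-1}B^T$, together with the key algebraic identity $BP_D=B$ (equivalently $BB_D^TB=B$). First I would rewrite the right-hand side: for $\mu\in\text{Range}(B)$ with $U^T\mu=0$, write $\mu=Bz$ for some $z\in\widetilde{W}$; since $U^TBz=0$ this is exactly the constraint set appearing in Lemma~\ref{Fdp-identity}. Then $\langle M^{-1}_{FETI}\mu,\mu\rangle=\langle B_D\widetilde{S}B_D^T Bz,Bz\rangle=\langle \widetilde{S}P_Dz,P_Dz\rangle$, so the supremum on the right equals
$$\sup_{U^TBz=0}\frac{\langle\lambda,Bz\rangle^2}{\langle\widetilde{S}P_Dz,P_Dz\rangle}.$$

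Next I would use $BP_D=B$ to replace $Bz$ by $BP_Dz$ in the numerator, giving $\langle\lambda,Bz\rangle=\langle\lambda,BP_Dz\rangle$. Setting $y=P_Dz$, the ratio becomes $\langle\lambda,By\rangle^2/\langle\widetilde{S}y,y\rangle$. The remaining point is that $y=P_Dz$ still satisfies the constraint $U^TBy=0$: indeed $By=BP_Dz=Bz$, and $U^TBz=0$ by hypothesis. Hence every admissible $z$ on the right produces an admissible $y$ on the left with the same value of the ratio, so
$$\sup_{U^TBz=0}\frac{\langle\lambda,Bz\rangle^2}{\langle\widetilde{S}P_Dz,P_Dz\rangle}\ \le\ \sup_{U^TBy=0}\frac{\langle\lambda,By\rangle^2}{\langle\widetilde{S}y,y\rangle}\ =\ \langle F_{DP}\lambda,\lambda\rangle,$$
where the last equality is exactly Lemma~\ref{Fdp-identity} (valid since $\lambda\in\text{Range}(I-P)\cap\text{Range}(B)$). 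Combining with the first paragraph, $\sup_{U^T\mu=0,\,\mu\in\text{Range}(B)}\langle\lambda,\mu\rangle^2/\langle M^{-1}_{FETI}\mu,\mu\rangle\le\langle F_{DP}\lambda,\lambda\rangle$, which is the claim.

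The main obstacle I anticipate is the bookkeeping around the projection-into-$\text{Range}(B)$ issue: when I write $\mu=Bz$ and then pass to $y=P_Dz$, I must be careful that $\langle M^{-1}_{FETI}\mu,\mu\rangle$ really equals $\langle\widetilde{S}P_Dz,P_Dz\rangle$ with no leftover terms, and that the substitution $Bz\mapsto BP_Dz$ is an identity rather than merely a bound — both rest on $BP_D=B$, which is stated in the text just before Lemma~\ref{Pd-bound}. A secondary subtlety is confirming that the supremum over $z$ with $U^TBz=0$ is genuinely attained / that the inequality direction is correct: here it is safe because we are only showing one supremum dominates the other by exhibiting, for each competitor on the smaller side, a competitor on the larger side with equal value, so no attainment argument is needed. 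Everything else is the routine substitution $w=\widetilde{S}^{1/2}z$ already carried out in Lemma~\ref{Fdp-identity}, so I would simply cite that lemma rather than repeat it.
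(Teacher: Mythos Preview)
Your proposal is correct and follows essentially the same route as the paper's proof: both use Lemma~\ref{Fdp-identity} together with the jump-preserving identity $BP_D=B$ to pass between the supremum over $z$ with denominator $\langle\widetilde{S}z,z\rangle$ and the supremum with denominator $\langle\widetilde{S}P_Dz,P_Dz\rangle=\langle M^{-1}_{FETI}Bz,Bz\rangle$. The only cosmetic difference is direction---the paper starts from $\langle F_{DP}\lambda,\lambda\rangle$ and restricts the supremum to vectors of the form $P_Dz$, whereas you start from the right-hand side and exhibit $y=P_Dz$ as an admissible competitor on the left---but the substance is identical.
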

\begin{proof}
Similar to the proof of Lemma \ref{upper-bound-Fdp}, we notice that
\begin{align*}
\langle F_{DP} \lambda , \lambda \rangle & = \sup_{U^T Bz=0} \frac{ \langle \lambda, Bz \rangle^2}{\langle \widetilde{S} z, z \rangle}\\
& \ge \sup_{U^T B P_Dz=0} \frac{\langle \lambda, B P_D z \rangle^2}{\langle \widetilde{S} P_D z, P_D z \rangle}.
\end{align*}
Since $P_D z$ preserves the jump, that is, $B P_Dz=Bz$, we finally obtain
$$\langle F_{DP} \lambda , \lambda \rangle \ge \sup_{U^T Bz=0} \frac{\langle \lambda, B z \rangle^2}{\langle M^{-1}_{FETI} Bz, Bz \rangle}.$$
The result follows by letting $\mu=Bz$.
\end{proof}

By the above Lemma~\ref{lower-bound-Fdp}, we can obtain that
\begin{equation}\label{min-eigvalue}\langle F_{DP} M^{-1}_{PP} \mu , M^{-1}_{PP} \mu \rangle \ge \langle M^{-1}_{PP} \mu, \mu \rangle.
\end{equation}

By using Lemmas~\ref{upper-bound-Fdp} and \ref{lower-bound-Fdp}, see also \eqref{max-eigvalue} and \eqref{min-eigvalue},
we obtain the resulting condition number bound:
\begin{theorem}\label{thm:proj}
For the FETI-DP algorithm with the projector preconditioner $M_{PP}^{-1}$, we obtain
$$\kappa(M_{PP}^{-1} F_{DP}) \le C \lambda_{TOL},$$
where $C$ is a constant depending only on $N_{F(i)}$, $N_{E(i)}$, $N_{I(E)}$, which are the number of faces per subdomain,
the number of edges per subdomain, and the number of subdomains sharing an edge $E$, respectively, with its explicit
form stated in \eqref{explicit:C}.
\end{theorem}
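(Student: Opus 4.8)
The plan is to obtain the bound as a short spectral argument on top of the two-sided estimates \eqref{max-eigvalue} and \eqref{min-eigvalue} already established from Lemmas~\ref{upper-bound-Fdp} and \ref{lower-bound-Fdp}; the analytic content involving $\lambda_{TOL}$ has by then been fully absorbed into Lemma~\ref{Pd-bound} (itself resting on Lemmas~\ref{lem:c1} and \ref{edge:upperbd}). First I would recall the standard projected-PCG framework of \cite{PP-FETI-DP,TW-Book}: the conjugate gradient iteration applied to \eqref{sys:Fdp} with the preconditioner $M_{PP}^{-1}=P_B^T(I-P)M^{-1}_{FETI}(I-P^T)P_B$ keeps all iterates, search directions and residuals inside the subspace $V:=\text{Range}(I-P)\cap\text{Range}(B)$, on which $M_{PP}^{-1}F_{DP}$ is self-adjoint and positive definite with respect to a suitable inner product (here \eqref{propU1}--\eqref{propU2} and the fact that $P_B$ is the orthogonal projection onto $\text{Range}(B)$ are used to see that $P_B$ and $I-P^T$ act as the identity on the relevant test vectors). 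Consequently $\kappa(M_{PP}^{-1}F_{DP})$ equals the ratio of the largest to the smallest eigenvalue of the restriction of $M_{PP}^{-1}F_{DP}$ to $V$, and it suffices to sandwich those eigenvalues in $[1,C\lambda_{TOL}]$.

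Next I would take an eigenpair $M_{PP}^{-1}F_{DP}\lambda=\theta\lambda$ with $\lambda\in V$ and $\langle F_{DP}\lambda,\lambda\rangle>0$, and set $\mu:=F_{DP}\lambda$, so that $\mu\in\text{Range}(B)$ and $M_{PP}^{-1}\mu=\theta\lambda$. Substituting this $\mu$ into \eqref{max-eigvalue} yields $\theta^2\langle F_{DP}\lambda,\lambda\rangle\le C\lambda_{TOL}\,\theta\,\langle\lambda,\mu\rangle=C\lambda_{TOL}\,\theta\,\langle F_{DP}\lambda,\lambda\rangle$, and substituting it into \eqref{min-eigvalue} yields $\theta^2\langle F_{DP}\lambda,\lambda\rangle\ge\theta\,\langle F_{DP}\lambda,\lambda\rangle$; since $M_{PP}^{-1}F_{DP}$ is positive on $V$ we have $\theta>0$, so dividing by $\theta\,\langle F_{DP}\lambda,\lambda\rangle$ gives $1\le\theta\le C\lambda_{TOL}$. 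Hence every eigenvalue of $M_{PP}^{-1}F_{DP}$ on $V$ lies in $[1,C\lambda_{TOL}]$ and the stated bound on the condition number follows, with $C$ the constant of Lemma~\ref{Pd-bound}, i.e.\ the explicit expression \eqref{explicit:C} in terms of $N_{F(i)}$, $N_{E(i)}$ and $N_{I(E)}$.

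The step I expect to need the most care is not this final chain of inequalities but the justification that \eqref{max-eigvalue} and \eqref{min-eigvalue} are the correct translations of Lemmas~\ref{upper-bound-Fdp} and \ref{lower-bound-Fdp}: one must identify the maximizer in $\sup_{U^T\mu=0,\ \mu\in\text{Range}(B)}\langle\lambda,\mu\rangle^2/\langle M^{-1}_{FETI}\mu,\mu\rangle$ as the vector $\mu_*$ characterized by $M_{PP}^{-1}\mu_*=\lambda$, and verify that for such $\mu_*\in\text{Range}(I-P^T)\cap\text{Range}(B)$ one has $\langle M^{-1}_{FETI}\mu_*,\mu_*\rangle=\langle M_{PP}^{-1}\mu_*,\mu_*\rangle=\langle\lambda,\mu_*\rangle$, so the supremum value is exactly $\langle M_{PP}^{-1}\mu_*,\mu_*\rangle$. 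Keeping track of the definiteness of the quadratic forms on $V$ — so that dividing by $\langle F_{DP}\lambda,\lambda\rangle$ is legitimate and no spurious zero or unit eigenvalues contaminate the extremes — is the only genuinely delicate bookkeeping; everything else is elementary and essentially follows \cite{TW-Book}.
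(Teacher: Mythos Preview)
Your proposal is correct and follows essentially the same route as the paper: the paper's own proof is a single sentence invoking \eqref{max-eigvalue} and \eqref{min-eigvalue}, and your eigenvector substitution $\mu=F_{DP}\lambda$ is precisely the natural way to extract the bounds $1\le\theta\le C\lambda_{TOL}$ from those two Rayleigh-quotient inequalities. The subtleties you flag (that $\mu=F_{DP}\lambda$ indeed lies in $\text{Range}(I-P^T)\cap\text{Range}(B)$ via \eqref{propU1}--\eqref{propU2}, and that the supremum in Lemmas~\ref{upper-bound-Fdp}--\ref{lower-bound-Fdp} is attained at $M_{PP}^{-1}\mu_*=\lambda$) are exactly the points the paper leaves implicit, so your expansion is faithful rather than different.
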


\section{Numerical results}\label{sec:num}
In this section, we present some numerical results to show the performance of our BDDC/FETI-DP algorithm
with an enriched coarse space.
We will test our algorithm for various choices of the coefficient $\rho(x)$.
We will present numerical tests for the two-dimensional case in Section~\ref{sec:2d}
and three-dimensional case in Section~\ref{sec:3d}.

\subsection{$2D$ case}\label{sec:2d}
In our experiments, we consider a unit square domain $\Omega$
and partition it into uniform square subdomains.
Each subdomain is then divided into uniform grids with a
grid size $h$. We use $H$ to denote the size of the subdomains.
In the conjugate gradient method for solving the system, the iteration is stopped when the relative residual is below $10^{-10}$.
\revHH{The algorithm is implemented using Matlab and run by a single process machine with Intel(R) Core(TM) i7-3520M CPU 2.90GHz and 16GB memory.}

We have tested and compared the following five methods:
\begin{itemize}

\item method0 : standard BDDC with primal unknowns at corners and multiplicity scaling

\item method1 : the two types of generalized eigenvalue problems in \eqref{KimChung:EIG:1} and \eqref{KimChung:EIG:2} with multiplicity scaling


\item method2 : the generalized eigenvalue problem in \eqref{Klawonn:EIG} with multiplicity scaling


\item methods 3 and 4 : the generalized eigenvalue problem in \eqref{Klawonn:EIG} with deluxe scaling

\end{itemize}
We note that in methods 0 to 3, BDDC algorithm with a change of basis is used,
and in method4, the FETI-DP algorithm with
a projector preconditioner is considered.
For all the methods, the condition numbers are controlled by $C \lambda_{TOL}$.
Depending on the choice of scaling matrices the set of adaptive primal constraints can be different.
In our numerical experiments, we will see that method3 with the deluxe scaling gives the smallest set of adaptive constraints among the
three methods, i.e., methods 1 to 3. In method3 and method4, the set of adaptive set of primal constraints is the same while the constraints are enforced by a projection in method4.
\revHH{We note that in \cite{KRR-2015} method4 is tested for various examples in two dimensions.
In that approach, change of basis formulation is not considered but economic version of
method4 is developed and tested to reduce the computational cost. As we will see, the change of
basis formulation seems to be computationally more efficient and stable than the projector preconditioning since the calculation of projection adds considerable cost and instability as the problem size increases, see also numerical results for 3D examples.}

We consider model problems with $\rho(x)$ having some high contrast channel patterns as shown
in Figure~\ref{Fig2:channel}. For this example, we perform the four methods with the same
$\lambda_{TOL}=1+\log(H/h)$.
The results for the five methods are presented in Table~\ref{TB3:channel}.
We note that since $\rho(x)$ is symmetric
across the subdomain interfaces, the set with the first type of primal unknowns in method1 is empty.
In addition, the four methods give the same set of adaptive primal constraints and the same bound
of condition numbers. The minimum eigenvalues are all one and the maximum eigenvalues
are the same for all the four methods.
\revHH{For the case when the adaptive constraints are not employed, we observe big condition numbers
and more iterations.}
\begin{table}
\footnotesize \caption{Performance of the methods 0 to 4 with the same $\lambda_{TOL}=1+\log(H/h)$ for $\rho(x)$ with channels ($p=10^6$): $N_d=3^2$, Iter (number of
iterations), $\lambda_{\min}$ (minimum eigenvalues), $\lambda_{\max}$ (maximum eigenvalues),
pnum1 (number of first type of primal unknowns in method1),
pnum2 (number of second type of primal unknowns in method1, or number of primal unknowns
from GEIG problem in methods 2 to 4), and time (time spent on PCG solver and
eigenvalue problems). }
\label{TB3:channel} \centering
\begin{tabular}{|c|c||c|c|c|c|c|c|c|} \hline
Channels & $H/h$ & method & pnum1 & pnum2 & iter & $\lambda_{\min}$ & $\lambda_{\max}$ & time \\
\hline
three & 14 & method0 &   &     & 15  & 1.00 & 126.26 & 0.37 \\
      &    & method1 & 0 & 24  & 5  & 1.00 & 1.03 & 0.32 \\
      &    & method2 &   & 24  & 5  & 1.00 & 1.03 & 0.27 \\
      &    & method3 &   & 24  & 5  & 1.00 & 1.03 & 0.21  \\
      &    & method4 &   & 24  & 5  & 1.00 & 1.03 & 0.15  \\
\hline
      & 28 & method0 &   &     & 17  & 1.00 & 159.11 & 0.99 \\
      &    & method1 & 0 & 24 & 6 & 1.00& 1.15 & 1.12 \\
      &    & method2 &   & 24 & 6 & 1.00& 1.15 & 1.51 \\
      &    & method3 &   & 24 & 6 & 1.00& 1.15 & 0.71 \\
      &    & method4 &   & 24 & 6 & 1.00& 1.15 & 0.34 \\
\hline
      & 42 & method0 &   &     & 19  & 1.00 & 178.21 & 2.40 \\
      &    & method1 & 0 & 24 & 7 & 1.00 & 1.24 & 2.30 \\
      &    & method2 &   & 24 & 7 & 1.00 & 1.24 & 3.22 \\
      &    & method3 &   & 24 & 7 & 1.00 & 1.24 & 3.30   \\
      &    & method4 &   & 24 & 7 & 1.00 & 1.24 & 0.68   \\
\hline
\end{tabular}
\end{table}

\begin{figure}[t]
\centering
\includegraphics[width=5cm,height=5cm]{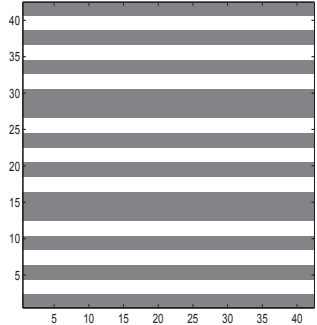}
\caption{In a $3 \times 3$ subdomain partition with three channels in each subdomain with
$H/h=14$: grey ($\rho(x)=1$) and white ($\rho(x)=p$).}
\label{Fig2:channel}
\end{figure}


\begin{figure}[t]
\centering
\includegraphics[height=5cm]{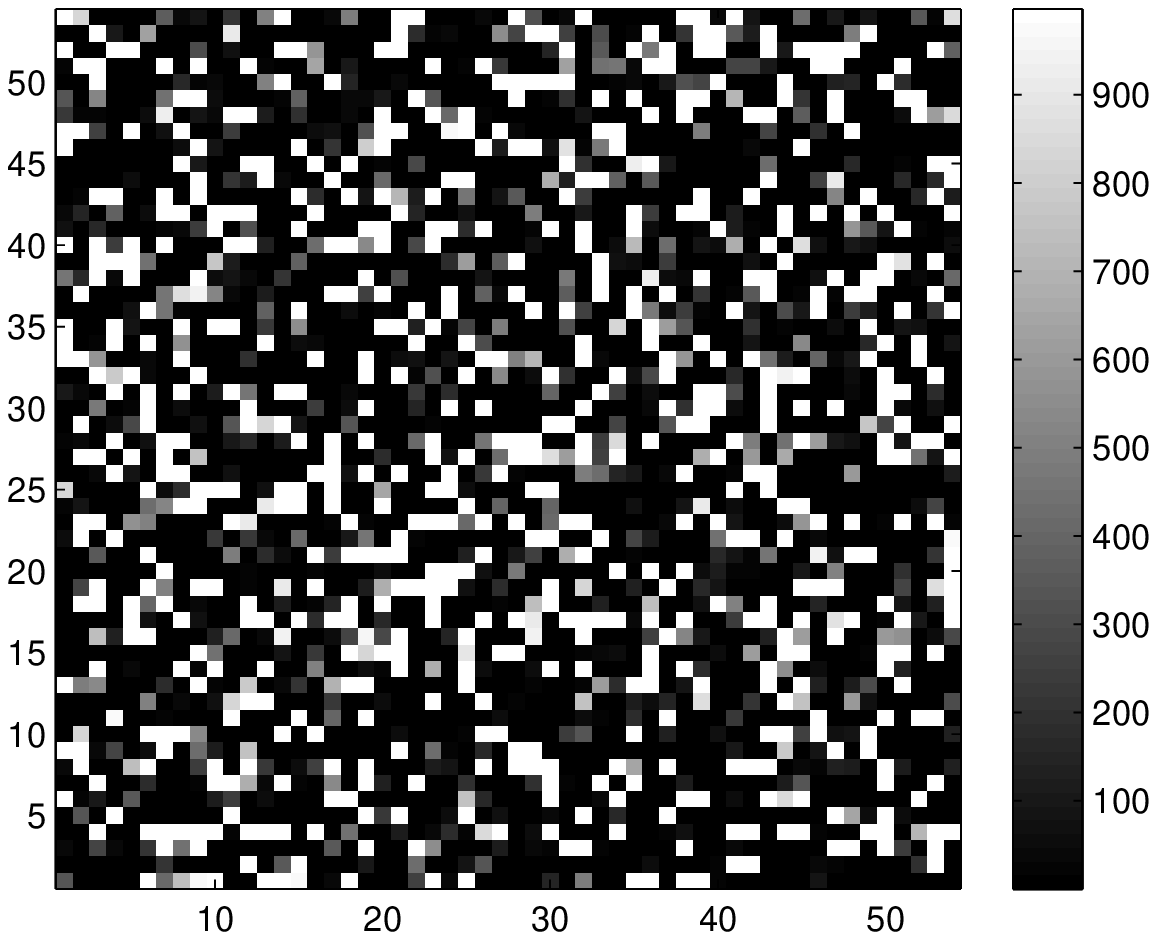}
\caption{$\rho(x)$ for the given $H/h=18$ and $N_d=3 \times 3$.}
\label{Fig1:rand18}
\end{figure}

To test the five methods for highly varying and random coefficients,
we consider $\rho(x)=10^{r}$ where $r$ is chosen
randomly from $(-3,3)$ for each fine grid element.
For the given $N_d=3 \times 3$, we perform
our algorithm for increasing $H/h$. In Figure~\ref{Fig1:rand18}, the value of $\rho(x)$
is plotted for $H/h=18$ and $N_d=3\times 3$.
The results are presented in Table~\ref{TB4:random}. For methods 1 and 2,
the number of adaptive primal unknowns is a considerable size, for an example, about 70\% of total interface unknowns for method1
and 60\% for method2 when $H/h=24$, and they seem to be not a feasible approach
for these test models. On the other hand, methods 3 and 4 with the deluxe scalings give
only about 20 adaptive constraints for all the test models, which clearly shows that
they produce a coarse problem quite robust to highly random coefficients.
\revHH{The results in method0 show that adaptive primal constraints are important in
obtaining efficient BDDC/FETI-DP methods.}
\begin{table}[thp!]
\footnotesize \caption{Performance of methods 0 to 4 for
the problem with random $\rho(x)$ in $(10^{-3},10^3)$ by increasing $H/h$ in a fixed subdomain partition $N_d=3^2$ and $\lambda_{TOL}=1+\log(H/h)$:
Iter (number of iterations), $\kappa$ (condition numbers), time (time spent on PCG solver and
eigenvalue problems),
pnum1 (number of first type of primal unknowns in method1), and
pnum2 (number of second type of primal unknowns in method1, or number of primal unknowns
from the GEIG problem in methods 2 to 4).
}
\label{TB4:random} \centering
\begin{tabular}{|c||c|c|c|c|c|c|c|} \hline
 $H/h$ & method &  Iter & $\kappa$ & time &  pnum1 & pnum2
\rule{0pt}{9pt}
\\  \hline
6   &  method0 &  111  & 5.221e+3    & 0.34 &  &   \\
    &  method1 &  5  & 1.23    & 0.08 & 42 &  14 \\
    &  method2 &  10 & 2.53    & 0.10 &    &  50  \\
    &  method3 &  7  & 1.30    & 0.07 &    &  17 \\
    &  method4 &  7  & 1.30    & 0.06 &    &  17 \\
\hline
12  &  method0 &  217 & 1.575e+4    & 0.84 &  &   \\
    &  method1 &  12 & 2.67    & 0.26 & 84 &  20 \\
    &  method2 &  17 & 3.29    & 0.24 &    &  89 \\
    &  method3 &  9  & 1.68    & 0.20 &    &  23 \\
    &  method4 &  9  & 1.68    & 0.13 &    &  23 \\
\hline
18  &  method0 &  271 & 1.866e+4        & 1.69 &  &  \\
    &  method1 &  11 & 1.81        & 0.42 & 139 &  23 \\
    &  method2 &  18 & 3.62        & 0.40 &     &  129\\
    &  method3 &   9 & 1.81        & 0.34 &     &  21\\
    &  method4 &   9 & 1.81        & 0.21 &     &  21\\
\hline
24  &  method0 &  351      & 1.860e+4      & 2.39  &  &  \\
    &  method1 &  11      & 1.64       & 0.66  &186  &  23 \\
    &  method2 &  19      & 4.03       & 0.65  &     &  172\\
    &  method3 &  11      & 1.96       & 0.58  &     &  20 \\
    &  method4 &  11      & 1.96       & 0.41  &     &  20 \\
\hline
30  &  method0 &  532      & 3.302e+4       & 4.72  &   &  \\
    &  method1 &  14      & 3.61       & 1.07  & 242  &  24 \\
    &  method2 &  21      & 4.31       & 0.97  &      &  201  \\
    &  method3 &  10      & 2.63       & 1.30  &      &  20 \\
    &  method4 &  10      & 2.63       & 0.42  &      &  20 \\
\hline
\end{tabular}
\end{table}

In Table~\ref{TB:coarse:Nd}, the five methods are
tested for highly varying and random $\rho(x)$
by increasing $N_d=N^2$ with a fixed local problem size $H/h=16$.
We observe similar performance to the previous case.
For the methods 1 and 2, the number of adaptive constraints becomes
problematic as $N$ increases, about 12 constraints per edge in method1
and about 9 constraints in method2 with the total unknowns $H/h=16$ per edge.
In methods 3 and 4, about one or two adaptive constraints are chosen
per edge. Again methods 3 and 4 can provide a scalable and robust coarse problem
for the test models even increasing $N$ with highly random coefficients.
On the other hand, when increasing $N$, the timing result in method4 shows that the cost for projection
becomes problematic and an efficient way of implementing the projection needs to be investigated.

\begin{table}[thp!]
\footnotesize \caption{Performance of the methods 0 to 4 with the same $\lambda_{TOL}=1+\log(H/h)$ for
highly varying and random $\rho(x)$ in $(10^{-3},10^3)$ by increasing $N_d=N^2$
and a fixed $H/h=16$:
$N$ (number of subdomains in one direction),
Iter (number of iterations), $\kappa$ (condition numbers), time (time spent
on PCG solver and eigenvalue problems),
pnum1 (number of first type of primal unknowns in method1),
and pnum2 (number of second type of primal unknowns in method1 or number of primal
unknowns from GEIG problem in methods 2 to 4).
}
\label{TB:coarse:Nd} \centering
\begin{tabular}{|c||c|c|c|c|c|c|} \hline
$N$ & method &  Iter & $\kappa$ & time & pnum1 & pnum2
\rule{0pt}{9pt}
\\  \hline
4   & method0   & 423 & 1.019e+4 & 3.85 &  & \\
    & method1   & 12 & 1.96 & 0.59 & 243 & 45\\
    & method2   & 19 & 3.74 & 0.65 &  & 235\\
    & method3   & 11 & 1.74      & 0.54 &  & 42\\
    & method4   & 11 & 1.74      & 0.46 &  & 42\\
   \hline
8   & method0   & $>$1000 & $>$3.567e+4      & 36.87  &  &  \\
    & method1   & 17 & 4.10       & 4.11  & 1165 & 186 \\
    & method2   & 19 & 3.72 & 3.45 &  & 1064\\
    & method3   & 16  & 3.11      & 2.56  &  & 189\\
    & method4   & 16  & 3.11      & 5.94  &  & 189\\
   \hline
16  & method0   & $>$1000 & $>$5.208e+4  & 150.55 &  &  \\
    & method1   & 20 & 4.96  & 101.20 & 5024 & 756 \\
    & method2  & 19 & 3.73 & 63.41 &  & 4584\\
    & method3   & 17   & 2.69 & 10.67 & & 805\\
    & method4   & 17   & 2.69 & 88.65 & & 805\\
   \hline
\end{tabular}
\end{table}

In Table~\ref{TB8:fracture},
performance of the five methods is presented for a fracture-like medium, shown in Figure~\ref{fracture},
by varying the contrast value $p$. For the test models, the number of adaptive constraints per edge
is about one or two in the three methods. Similarly to the previous example, method3 shows the best performance
with the smallest set of adaptive primal constraints.
We note that the condition numbers and iteration counts in Tables~\ref{TB3:channel}-\ref{TB8:fracture}
confirm our theoretical estimate in Section~\ref{analysis}.

\begin{figure}[t]
\centering
\includegraphics[width=6cm,height=6cm]{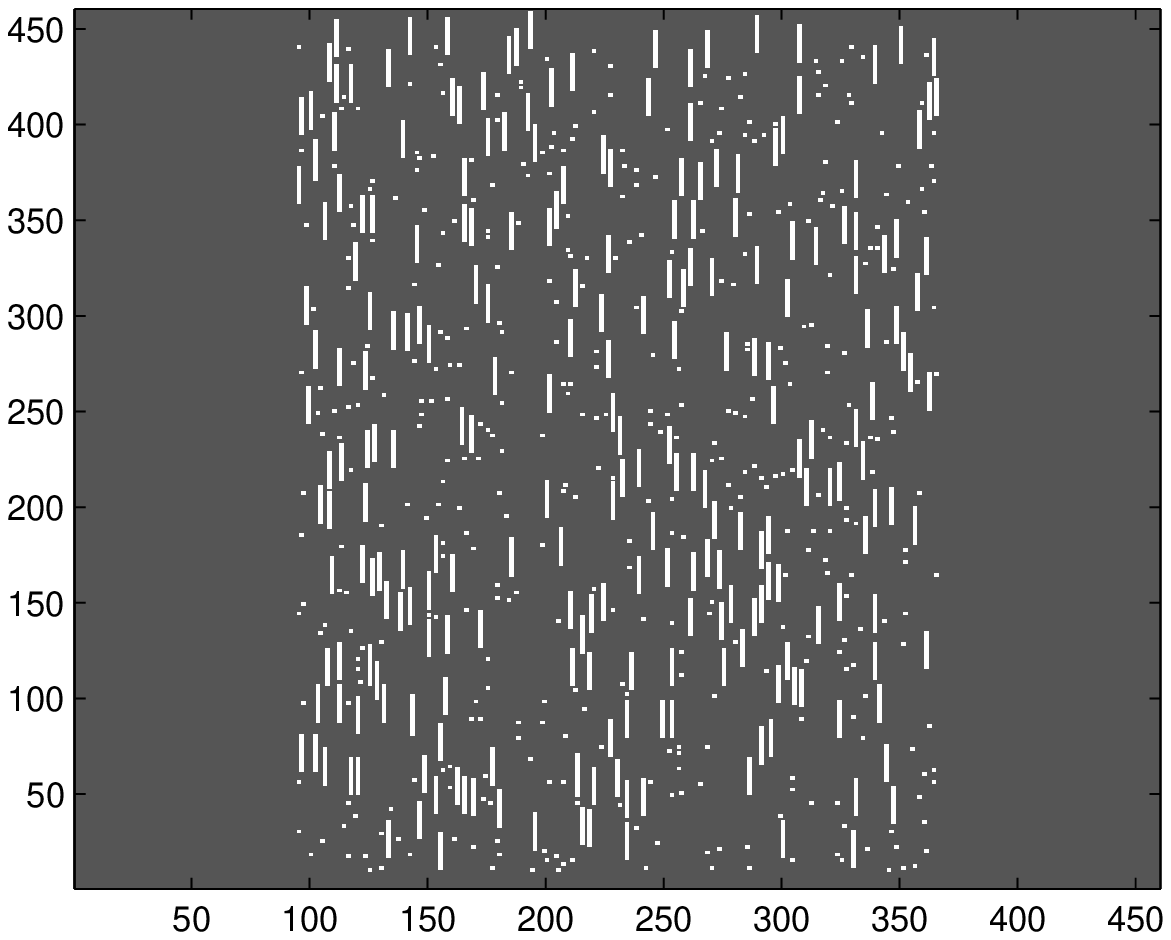}
\caption{A fracture-like medium: grey represents value $1$ and white represents the value $p$,
which is chosen as $1,10^3$ and $10^6$ in our numerical simulations.}
\label{fracture}
\end{figure}

\begin{table}[thp!]
\footnotesize \caption{Performance of methods 0 to 4 for
a fracture-like medium with varying $p$, $\lambda_{TOL}=1+\log(H/h)$, $H/h=23$, and $N=20$:
Iter (number of iterations), $\kappa$ (condition numbers),
time (time spent on PCG solver and eigenvalue problems),
pnum1 (number of first type of primal unknowns in method1),
and pnum2 (number of second type of primal unknowns in method1, or
number of primal unknowns from GEIG problem in methods 2 to 4).
}\label{TB8:fracture} \centering
\begin{tabular}{|c||c|c|c|c|c|c|} \hline
$p$& method&  Iter & $\kappa$ & time & pnum1 & pnum2
\rule{0pt}{9pt}
\\  \hline
1   & method0 & 19  & 4.00 & 25.53 &  &  \\
    & method1 & 9  & 1.46 & 26.02 & 0 & 760 \\
    & method2 & 9  & 1.46 & 26.10 &   & 760 \\
    & method3 & 9  & 1.46 & 25.52 &  & 760 \\
    & method4 & 9  & 1.46 & 203.35 &  & 760 \\
   \hline
$10^3$  & method0 &  183 & 488.88            & 78.19 &  & \\
        & method1 &  15 & 2.87            & 107.76 & 539 & 818 \\
        & method2 & 17   &  3.45  & 117.19    &   & 1345 \\
        & method3 & 13   &  2.07     & 45.66 &   & 794 \\
        & method4 & 13   &  2.07     & 215.18 &   & 794 \\
   \hline
$10^6$  & method0 &  $>$556 & N/A & 200.965 &  &   \\
        & method1 &  15 & 2.96 & 59.67 & 541 & 818  \\
        & method2 & 17       & 3.88 & 45.11 &   & 1349 \\
        & method3 & 13       & 2.09         & 26.77 &   & 795 \\
         & method4 & 13       & 2.09         & 214.00 &   & 795 \\
   \hline
\end{tabular}
\end{table}

\subsection{$3D$ case}\label{sec:3d}
In the three-dimensional case, we consider $\Omega$ to be
the unit cube $(0,1)^3$ and decompose the domain into $N^3$ subdomains with side length $H=1/N$. Each subdomain is then divided into uniform tetrahedra with size $h$.
We assume again that the meshes in different subdomains are matching on common faces and edges.
The CG iteration is stopped when the relative residual has been reduced by $10^{-10}$. We consider
the same four methods, methods 1 to 4, as in the previous subsection for faces
with the additional generalized eigenvalue problems \eqref{eq:Geig:edge} on edges as introduced in Section~\ref{sec:GEIG:edge} and perform the four methods with given values $\lambda^F_{TOL}=1+\log(H/h)$ and $\lambda^E_{TOL}=4H/h$ for face and edge GEIG problems, respectively. The estimate
of the condition numbers is then $C \lambda_{TOL}$ for all the four methods.
\revHH{The algorithm is implemented using Matlab and run by a single process machine with Intel(R)~Xeon(R)~CPU~X5650~2.67GHz and 64GB memory.}

We first consider model problems with $\rho(x)$ having some high contrast channel patterns as shown in  Figure~\ref{Fig2:channel-3d}.
In Table~\ref{3d-channel-eta}, we list the results of the four methods for a fixed subdomain partition $N_d=3^3$ and a fixed $H/h=12$ by varying the contrast $p$. We can see that method4 is not stable when $p\ge 10^3$, but methods 1 to 3 work well even if $p$ is very large.
The numerical instability in method4 is caused by the ill conditioning and roundoff error in the calculation of the projection $P$ as discussed in \cite{PP-FETI-DP}. The four methods give the same set of primal unknowns, the minimum eigenvalue as one, and the same maximum eigenvalues. The performance is similar to the previous case that method1 is the most efficient in view of timing results.

\begin{figure}[t]
\centering
\includegraphics[width=6cm,height=6cm]{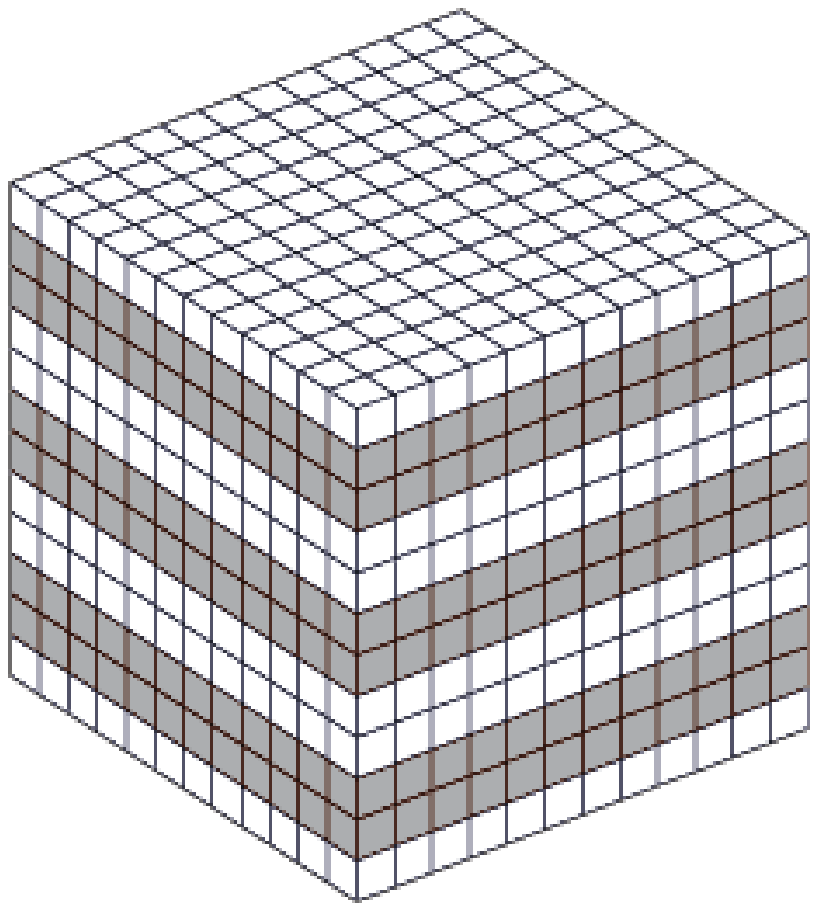}
\caption{In a $3 \times 3\times3$ subdomain partition, $\rho(x)$ with one channel for $H/h=4$: white ($\rho(x)=1$) and grey ($\rho(x)=p$).}
\label{Fig2:channel-3d}
\end{figure}

\begin{table}
\footnotesize \caption{Performance of the methods 1 to 4 with the same $\lambda^F_{TOL}=1+\log(H/h),\lambda^E_{TOL}=4H/h$ for $\rho(x)$ with one channel ($p=10,10^2,10^3,10^4,10^6$): $N_d=3^3$ and $H/h=12$, Iter (number of
iterations), $\lambda_{\min}$ (minimum eigenvalues), $\lambda_{\max}$ (maximum eigenvalues), time (time spent
on PCG solver and eigenvalue problems),
pnum1 (number of first type of primal unknowns on faces in method1), and
pnum2 (number of second type of primal unknowns on faces in method1, or number of primal unknowns
from the GEIG problem on faces in methods 2 to 4), pnumE (number of primal unknowns on edges). $p1=\frac{pnum1}{M_F}$, $p2=\frac{pnum2}{M_F}$, $pE=\frac{pnumE}{M_E}$, $M_F, M_E$ are the numbers of faces and edges, respectively. }
\label{3d-channel-eta} \centering
\begin{tabular}{|c||c|c|c|c|c|c|c|c|c|c|c|} \hline
$p$ & method & pnum1 & pnum2 & pnumE & iter & $\lambda_{\min}$ & $\lambda_{\max}$ & time & $p1$& $p2$& $pE$ \\
\hline
$10$      & method1 & 0 & 54 &36 & 11& 1.00 & 1.64 & 79.81  &0&1&1 \\
          & method2 &   & 54 &36 & 11& 1.00 & 1.64 & 88.48  & &1&1 \\
          & method3 &   & 54 &36 & 11& 1.00 & 1.64 & 1.17e+3& &1&1 \\
          & method4 &   & 54 &36 & 13& 1.01 & 1.96 & 2.36e+3& &1&1 \\
\hline
$10^2$ & method1 & 0 & 54 &36 & 10& 1.00 & 1.63 & 80.79  &0&1&1 \\
       & method2 &   & 54 &36 & 10& 1.00 & 1.63 & 93.21  & &1&1 \\
       & method3 &   & 54 &36 & 10& 1.00 & 1.62 & 1.17e+3& &1&1     \\
       & method4 &   & 54 &36 & 12& 1.00 & 1.95 & 1.86e+3& &1&1     \\
\hline
  $10^3$  & method1 & 0 & 54 &36 & 10    & 1.00 & 1.62 & 78.89  &0&1&1 \\
          & method2 &   & 54 &36 & 10    & 1.00 & 1.62 & 93.99  & &1&1 \\
          & method3 &   & 54 &36 & 10    & 1.00 & 1.62 & 1.17e+3& &1&1     \\
          & method4 &   & 54 &36 &$>1000$& & & & & &      \\
\hline
$10^4$ & method1 & 0 & 54 &36 & 10    & 1.00 & 1.62 & 79.84  &0&1&1 \\
       & method2 &   & 54 &36 & 10    & 1.00 & 1.62 & 96.12  & &1&1 \\
       & method3 &   & 54 &36 & 10    & 1.00 & 1.62 & 1.17e+3& &1&1     \\
       & method4 &   & 54 &36 &$>1000$& & & & & &      \\
\hline
$10^6$ & method1 & 0 & 54 &36 & 10    & 1.00 & 1.62 & 79.66  &0&1&1 \\
       & method2 &   & 54 &36 & 10    & 1.00 & 1.62 & 117.50  & &1&1 \\
       & method3 &   & 54 &36 & 10    & 1.00 & 1.62 & 1.18e+3& &1&1     \\
       & method4 &   & 54 &36 &$>1000$& & & & & &      \\
\hline
\end{tabular}
\end{table}

\revHH{For the above channel model, we can consider an economic version for method3 to reduce the computational cost. As we can see in Table~\ref{3d-channel-eta-time}, most computing time was spent in forming generalized eigenvalue problems on each face and edge. In the economic version, see \cite{KRR-2015}, the matrices considered in
the generalized eigenvalue problems are replaced with those obtained from the local stiffness matrices
restricted to the slab of faces and edges.
The thickness $\eta$ of the slab is chosen to be $h$ for each face and edge. When $\eta$ is chosen
to be $H$, the width of the subdomain, the economic version is identical to the original one.
The performance of the economic version is compared to the original one in Table~\ref{3d-channel-eta-full}.
Thus by using the slab with $\eta=h$ the computing cost is greatly reduced but a larger set of primal constraints is obtained. For both approaches, the results are robust to the contrast of the channel.
In Figure~\ref{Fig-face-eig-312-channel-p1000}-\ref{Fig-edge-eig-312-channel-p1000}, we plot eigenvalues of each face and edge with $\eta=h$
and $\eta=H$ to study effective choices for $\lambda^F_{TOL}$ and $\lambda^E_{TOL}$.
The eigenvalues are plotted except that corresponding to infinity.
For other values of $p$,
the patterns of eigenvalues are similar. For the e-version, the eigenvalues are larger and thus larger values of
$\lambda_{TOL}$ could give a smaller set of primal constraints. For the above channel models, even with larger
values of $\lambda^E_{TOL}$ good condition numbers are still obtained.
}


\begin{table}
\footnotesize \caption{Comparison of timing results in method3 with e-version(see \cite{KRR-2015}, $\eta=h$)
and without e-version for $\rho(x)$ with one channel ($p=10,10^2,10^3$ and $H/h=12$, $N_d=3^3$): GEP-F (computing time in forming generalized eigenvalue problem for face), GEP-E (computing time
in forming generalized eigenvalue problem for edge), Deluxe-FE (computing time in forming deluxe scalings $D_F$ and $D_E$). }
\label{3d-channel-eta-time} \centering
\begin{tabular}{|c||c|c|c|c|} \hline
$p$ & e-version &forming-GEIG-F & forming-GEIG-E & Deluxe-FE \\
\hline
$10$   & with   &274.75 &32.46   & 0.07 \\
       & without&510.04 &681.53  & 0.11 \\
\hline
$10^2$ & with   &297.29&33.42 & 0.07  \\
       & without&504.36&676.88& 0.18  \\
\hline
$10^3$ & with   &260.66&29.16  & 0.06  \\
       & without&497.12&684.79 & 0.15  \\
\hline
\end{tabular}
\end{table}

%

\begin{figure}[t]
\centering
\includegraphics[width=14cm,height=8cm]{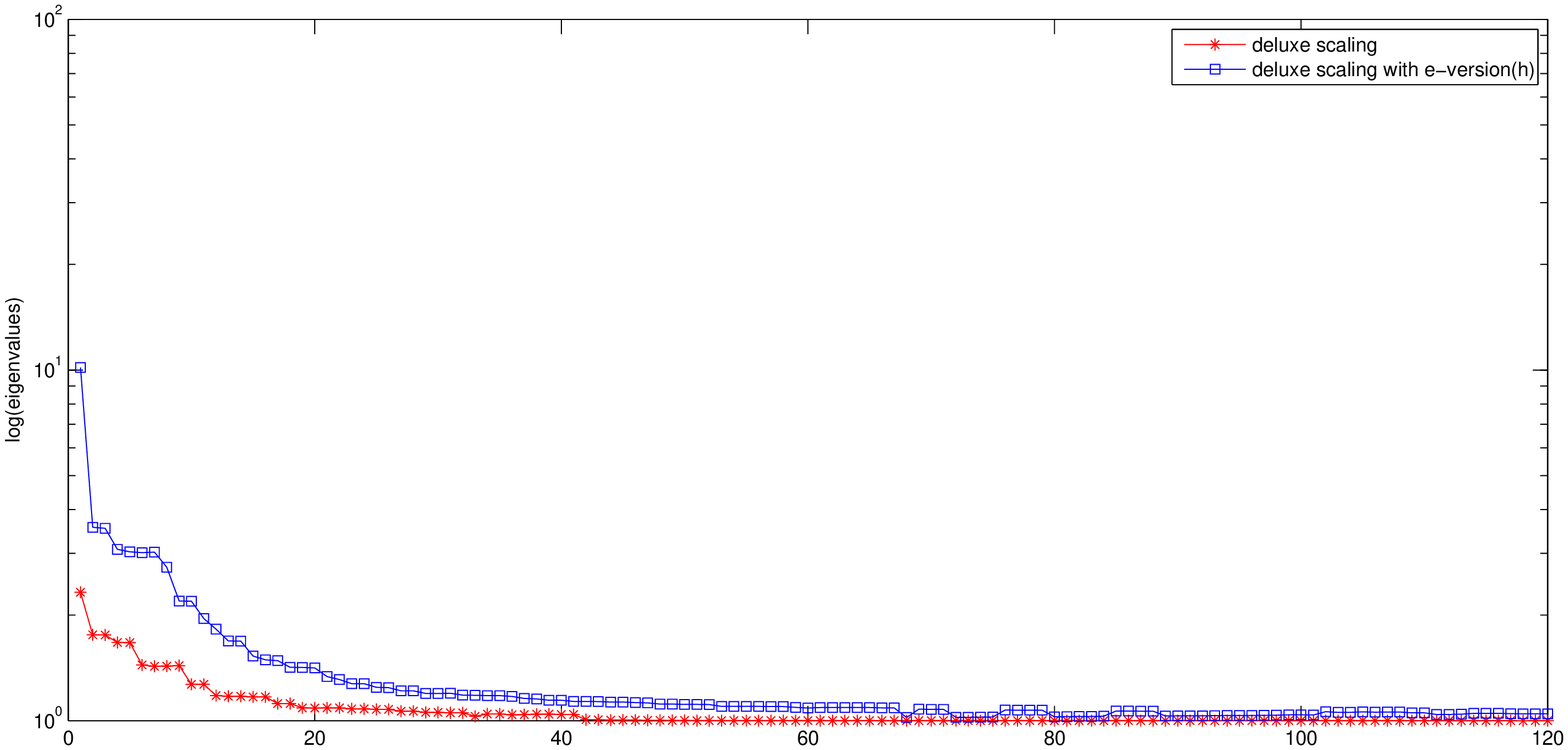}
\caption{Plot of eigenvalues except infinity for a face: channel model with $p=1000$ and $H/h=12$.}
\label{Fig-face-eig-312-channel-p1000}
\end{figure}



\begin{figure}[t]
\centering
\includegraphics[width=14cm,height=8cm]{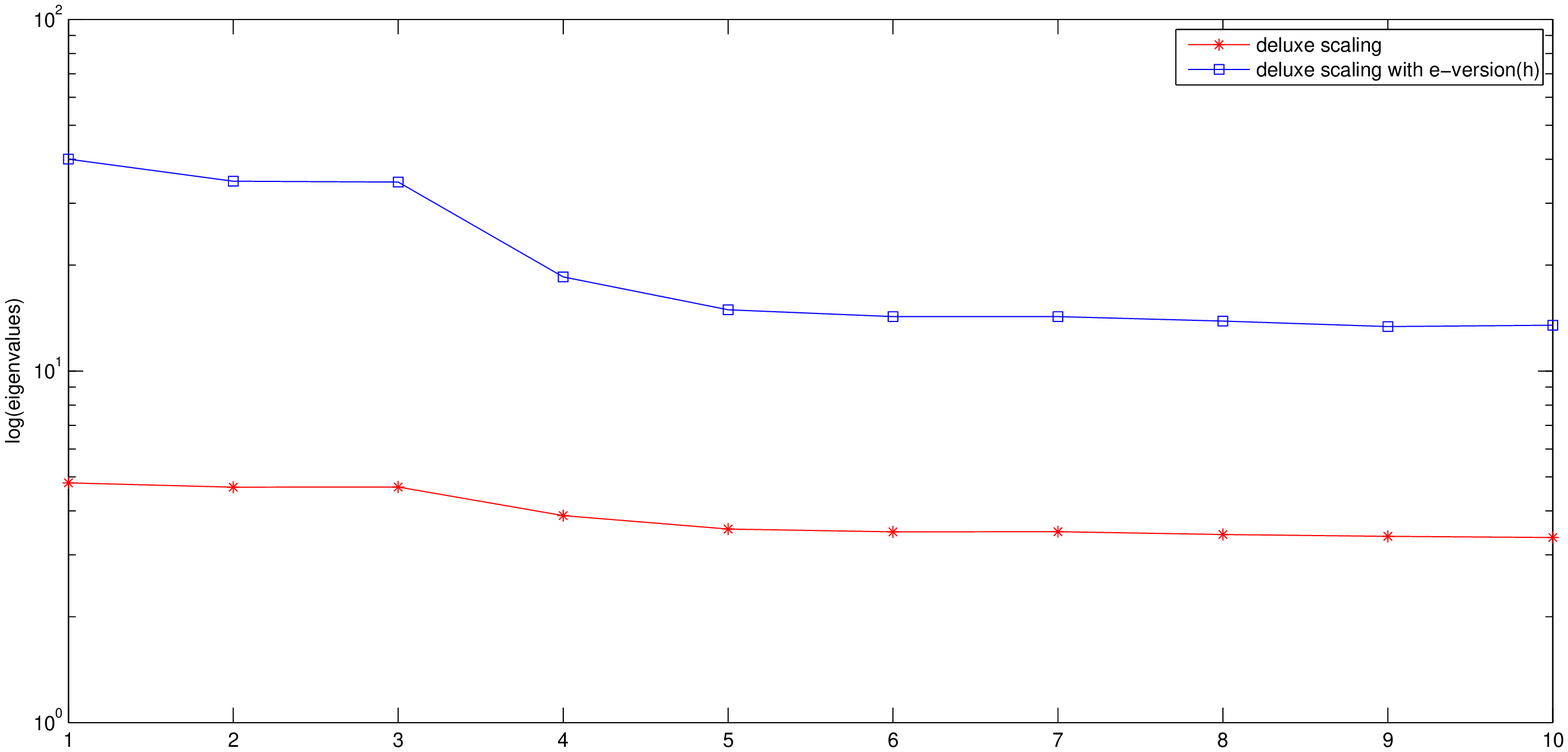}
\caption{Plot of eigenvalues except infinity for an edge: channel model with $p=1000$ and $H/h=12$.}
\label{Fig-edge-eig-312-channel-p1000}
\end{figure}

\begin{table}
\footnotesize \caption{Comparison of method3 with e-version ($\eta=h$, $\lambda^E_{TOL}=1000$) and without e-version ($\eta=H$, $\lambda^E_{TOL}=4H/h$) and $\lambda^F_{TOL}=1+\log(H/h)$ for $\rho(x)$ with one channel ($p=10,10^2,10^3$) in each subdomain and with $N_d=3^3$ and $H/h=12$: Iter (number of
iterations), $\lambda_{\min}$ (minimum eigenvalues), $\lambda_{\max}$ (maximum eigenvalues), time (time spent
on PCG solver and eigenvalue problems),
pnumF (number of primal unknowns on faces), pnumE (number of primal unknowns on edges). $pF=\frac{pnumF}{M_F}$, $pE=\frac{pnumE}{M_E}$, $M_F, M_E$ are the numbers of faces and edges, respectively.}
\label{3d-channel-eta-full} \centering
\begin{tabular}{|c||c|c|c|c|c|c|c|c|c|} \hline
$p$ & $\eta$ & pnumF & pnumE & iter & $\lambda_{\min}$ & $\lambda_{\max}$ & time & $pF$ & $pE$ \\
\hline
$10$   & $h$ & 252   &36 & 9  & 1.00 & 1.50  & 315.41  &4.67 &1 \\
       & $H$ & 54    &36 & 11 & 1.00 & 1.64  & 1.17e+3 & 1   &1 \\
       \hline
$10^2$ & $h$ & 216   &36 & 9  & 1.00 & 1.61  & 338.32  &4.00 &1 \\
       & $H$ & 54    &36 & 10 & 1.00 & 1.62  & 1.17e+3 &1    &1 \\
\hline
$10^3$ & $h$ & 216   &36 & 9  & 1.00 & 1.62  & 296.31  &4.00 &1 \\
       & $H$ &  54   &36 & 10 & 1.00 & 1.62  & 1.17e+3 &1    &1 \\
\hline
\end{tabular}
\end{table}

We now consider highly varying and random coefficients $\rho(x)=10^{r}$ where $r$ is chosen randomly from $(-3,3)$ for each fine hexahedral grid element.
As an example, the value of $\rho(x)$ is presented for $H/h=2$ and $N_d=2^3$ in Figure~\ref{Fig1:rand18-3d}.
For a given $N_d=3^3$, we perform
our algorithm for increasing $H/h$ in  Table~\ref{3d-random-Hh}. We observe that for method1 and method2, the number of adaptive primal unknowns is still a considerable size as in two dimensions, for example, about 67\% of total face interior unknowns for method1 and 45\% for method2, 81\% of total edge interior unknowns for both methods 1 and 2 when $H/h=12$. On the other hand, method3 with deluxe scalings gives only less than 4 adaptive constraints on each face, but still as much edge constraints as method1 and method2. The iteration counts of methods 1 and 2 are almost the same. For method3, the iteration counts is less than methods 1 and 2. 
In addition, method2 becomes more and more efficient than method1 considering the timing results when $H/h$ grows. This is due to the fact that the cost for additional generalized eigenvalue problems in
method1 exceeds the cost for the parallel sum in method2 for this test example.

\begin{figure}[t]
\centering
\includegraphics[width=16cm,height=6cm]{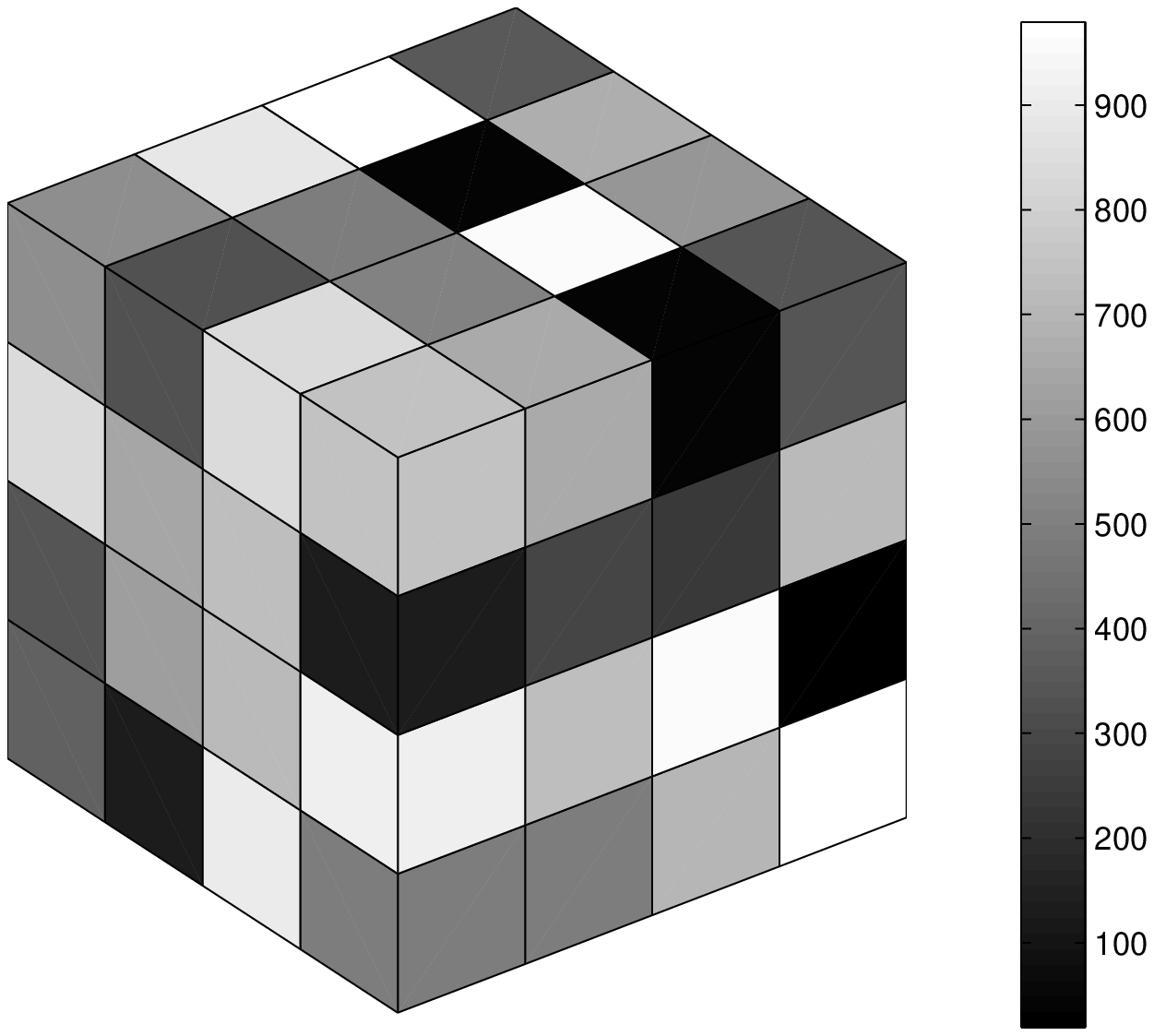}
\caption{ $\rho(x)$ for the given $H/h=2$ and $N_d=2 \times 2 \times 2$.}
\label{Fig1:rand18-3d}
\end{figure}


\begin{table}[thp!]
\footnotesize \caption{Performance of methods 1 to 4 for
the problem with random $\rho(x)$ in $(10^{-3},10^3)$ by increasing $H/h$ in a fixed subdomain partition $N_d=3^3$ and $\lambda^F_{TOL}=1+\log(H/h), \lambda^E_{TOL}=4H/h$:
Iter (number of iterations), $\kappa$ (condition numbers), time (time spent on PCG solver and
eigenvalue problems),
pnum1 (number of first type of primal unknowns on faces in method1), and
pnum2 (number of second type of primal unknowns on faces in method1, or number of primal unknowns
from the GEIG problem on faces in methods 2 to 4), pnumE (number of primal unknowns on edges). $p1=\frac{pnum1}{M_F}$, $p2=\frac{pnum2}{M_F}$, $pE=\frac{pnumE}{M_E}$, $M_F, M_E$ are the numbers of faces and edges, respectively.
}
\label{3d-random-Hh} \centering
\begin{tabular}{|c||c|c|c|c|c|c|c|c|c|c|c|} \hline
 $H/h$ & method &  Iter & $\kappa$ & time &  pnum1 & pnum2 & pnumE & $p1$& $p2$& $pE$
\rule{0pt}{9pt}
\\  \hline
4   &  method1 &  13 & 3.97    & 1.67  & 336 &  70 &105 &6.22&1.30&2.92\\
    &  method2 &  15 & 3.98    & 2.97  &     & 319 &105 & &5.91&2.92\\
    &  method3 &  10 & 1.47    & 13.26 &     &  91 &103 & &1.69&2.86\\
    &  method4 &  15 & 4.62    & 11.37 &     &  91 &103 & &1.69&2.86\\
\hline
8   &  method1 &  25 & 7.70    & 27.35 & 1771 &  134 & 218&32.80&2.50&6.10\\
    &  method2 &  26 & 7.83    & 29.16 &      &  1292& 218&&23.90&6.10\\
    &  method3 &  12 & 1.89    & 181.86&      &  147 & 201&&2.72 &5.58\\
    &  method4 &  18 & 3.92    & 235.73&      &  147 & 201&&2.72 &5.58\\
\hline
12  &  method1 &  30 & 12.03        & 203.99  & 4148 &  198 &320&76.80&3.70&8.90\\
    &  method2 &  31 & 12.21        & 132.01  &      &  2949&320&&54.60&8.90\\
    &  method3 &  15 & 2.41         & 1.20e+3 &      &  190 &289&&3.52&8.03\\
    &  method4 &  26 & 12.88        & 3.13e+3 &      &  190 &289&&3.52&8.03\\
\hline
16  &  method1 &  40 & 18.15        & 917.99  & 7633 &  241 &394&141.40&4.50&10.90\\
    &  method2 &  42 & 18.15        & 494.63  &      &  5018&394&&92.90&10.90\\
    &  method3 &  17 & 3.65         & 5.11e+3 &      &  237 &336& &4.39&9.33  \\
    &  method4 &  26 & 7.04         & 6.05e+3 &      &  237 &336& &4.39&9.33 \\
\hline
\end{tabular}
\end{table}

\begin{figure}[t]
\centering
\includegraphics[width=14cm,height=8cm]{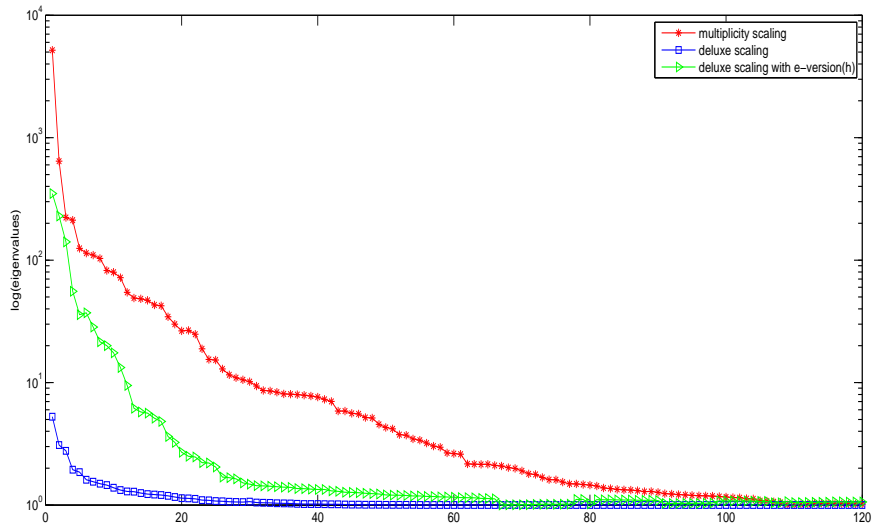}
\caption{Plot of eigenvalues for a face except infinity: random $\rho(x)$, $H/h=12$.}
\label{Fig312-random-face-eig-ms-ds-e-version}
\end{figure}
\begin{figure}[t]
\centering
\includegraphics[width=14cm,height=8cm]{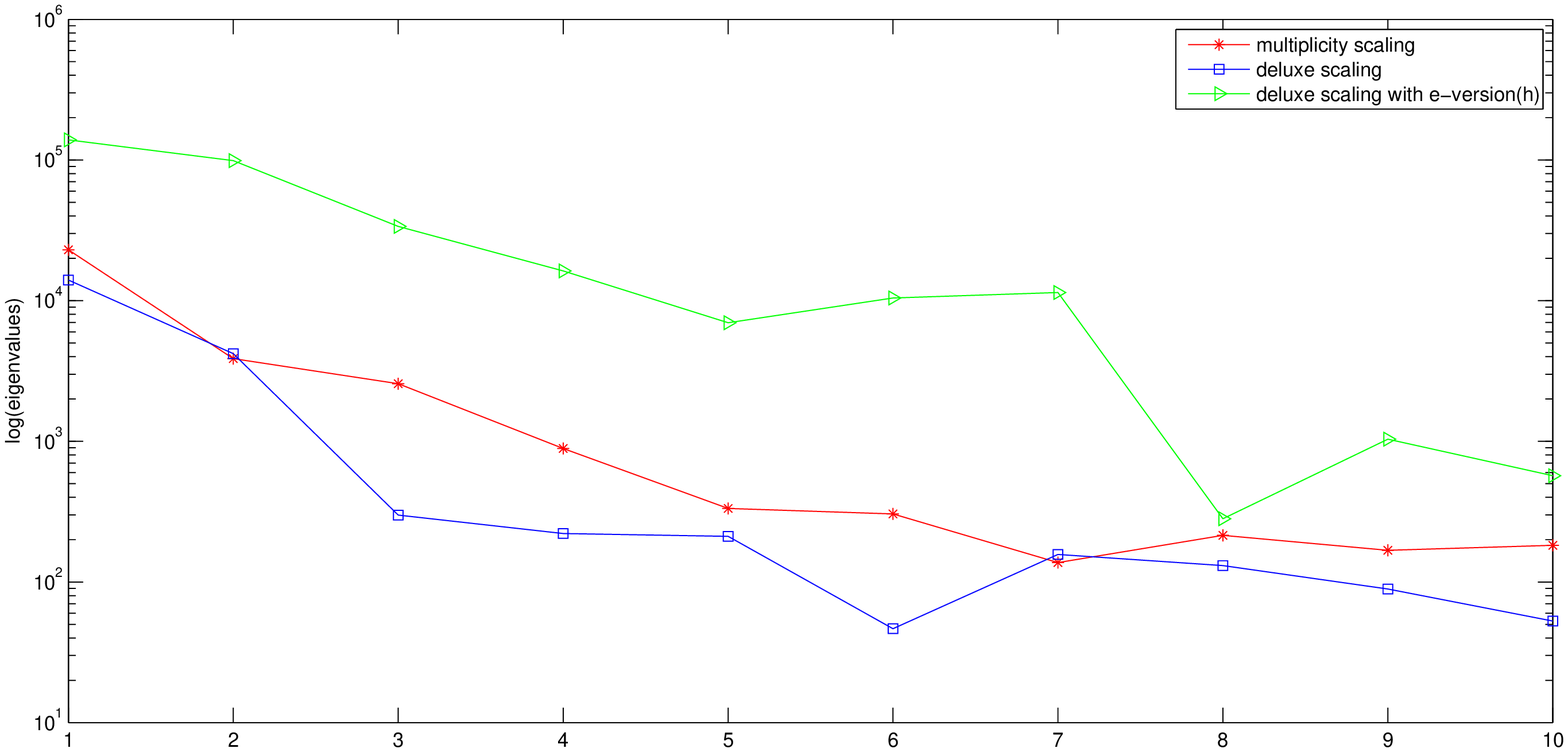}
\caption{Plot of eigenvalues for an edge except infinity: random $\rho(x)$, $H/h=12$.}
\label{Fig312-random-edge-eig-ms-ds-e-version}
\end{figure}

\revHH{To study the effective choice for $\lambda_{TOL}$, we plot the eigenvalues of the generalized eigenvalue problems for a face $F$ and an edge $E$ in the model considered in Table~\ref{3d-random-Hh}.
We plot eigenvalues for multiplicity scalings, deluxe scalings with and without e-version; see Figures~\ref{Fig312-random-face-eig-ms-ds-e-version} and \ref{Fig312-random-edge-eig-ms-ds-e-version}.
We can see that edge eigenvalues are much larger than those for faces and  we thus choose $\lambda_E=1000$ and compare the performance of the four methods in Table~\ref{3d-random-Hh-lE1000}.
For methods 3 and 4 with deluxe scalings, the condition numbers and iteration counts seem to be quite robust to the larger choice of $\lambda_E$ and with much smaller set of primal unknowns on edges, about less than half for the case with $\lambda_E=4 H/h$.
On the other hand, for methods 1 and 2 with multiplicity scalings the condition numbers and iteration counts greatly increase.
In addition, to reduce computational cost in method3 we apply e-version of generalized eigenvalue problems and deluxe scalings. The results are listed in Table~\ref{3d-random-Hh-lE1000}. They show that
e-version greatly improves computational efficiency in method3.
On the other hand, for e-version a larger set of adaptive primal constraints is selected. For the e-version with
larger $\lambda_{TOL}$, we can reduce the number of adaptive primal constraints maintaining good condition numbers.
For the test example, the use of lager $\lambda_{TOL}$ does not reduce the computing time but it can reduce
the computing time when a large coarse problem becomes a bottleneck of computation.}

\revHH{
\begin{table}[thp!]
\footnotesize \caption{Performance of methods 1 to 4 for
the problem with random $\rho(x)$ in $(10^{-3},10^3)$ by increasing $H/h$ in a fixed subdomain partition $N_d=3^3$ and $\lambda^F_{TOL}=1+\log(H/h), \lambda^E_{TOL}=1000$, and e-method3(L) (with $\lambda^F_{TOL}=10\log(H/h)$, $\lambda^E_{TOL}=10^4\log(H/h)$):
Iter (number of iterations), $\kappa$ (condition numbers), time (time spent on PCG solver and
eigenvalue problems),
pnum1 (number of first type of primal unknowns on faces in method1), and
pnum2 (number of second type of primal unknowns on faces in method1, or number of primal unknowns
from the GEIG problem on faces in methods 2 to 4), pnumE (number of primal unknowns on edges). $p1=\frac{pnum1}{M_F}$, $p2=\frac{pnum2}{M_F}$, $pE=\frac{pnumE}{M_E}$, $M_F, M_E$ are the numbers of faces and edges, respectively.
}
\label{3d-random-Hh-lE1000} \centering
\begin{tabular}{|c||c|c|c|c|c|c|c|c|c|c|c|} \hline
 $H/h$ & method &  Iter & $\kappa$ & time &  pnum1 & pnum2 & pnumE & $p1$& $p2$& $pE$
\rule{0pt}{9pt}
\\  \hline
4   &  method1 &  74 & 183.15    & 4.55  & 336 &  70 &65 &6.22&1.30&1.81\\
    &  method2 &  76 & 1.83e+2   & 4.85  &     & 319 &65 & &5.91&1.81\\
    &  method3          &  11 & 2.09    & 15.44 &     &  91 &56 &  &1.69 &1.56\\
    &  e-method3 &9 &1.36 &22.28 &     & 177&83   &  & 3.28 &2.31 \\
    & e-method3(L) &12&2.15&23.62& &109&51& &2.02 &1.42\\
    &  method4 &  15 & 4.65    & 14.54 &     &  91 &56 & &1.69&1.56\\\hline
8   &  method1 &  127 & 263.94    & 51.68 & 1771 &  134 & 101&32.80&2.50&2.80\\
    &  method2 &  129 & 263.83    & 33.47 &      &  1292& 101&&23.90&2.80\\
    &  method3 &  15 & 2.54    & 183.59&      &  147 & 84&&2.72 &2.33\\
    &  e-method3 &11&2.06 &94.37 &     &547 &182 &  & 10.13 &5.06    \\
    &  e-method3(L) &16&4.14&97.76& &276&90& &5.11 &2.50\\
    &  method4 &  19 & 3.95    & 260.11&      &  147 & 84&&2.72 &2.33\\\hline
12  &  method1 &  135 & 2.39e+2     & 730.99  & 4148 &  198 &137&76.80&3.70&3.80\\
    &  method2 &  140 & 2.39e+2     & 305.49  &      &  2949&137&&54.60&3.80\\
    &  method3 &  21 & 5.11         & 1.13e+3 &      &  190 &115&&3.52&3.19\\
    &  e-method3  &12&3.16 &289.09 &     & 1068&288 &  &19.80 &8.00 \\
    &  e-method3(L) &20&4.70&300.35& &499&128& &9.24&3.56\\
    &  method4 &  28 & 12.91        & 3.54e+3 &      &  190 &115&&3.52&3.19\\
\hline
16  &  method1 &  154 & 2.90e+2     & 4.53e+3 & 7633 &  241 &196&141.40&4.50&5.40\\
    &  method2 &  158 & 2.90e+2     & 1.12e+3 &      &  5018&196&&92.90&5.40\\
    &  method3 &  22 & 5.13         & 5.05e+3 &      &  237 &148& &4.39&4.11  \\
    &  e-method3  &12&2.80 &798.96 &     & 1742&354&  &32.30&9.8    \\
    &  e-method3(L) &21&4.89&912.92& &722&143& &13.37&3.97\\
    &  method4 &  27 & 7.04         & 6.27e+3 &      &  237 &148& &4.39&4.11 \\
\hline
\end{tabular}
\end{table}
}

The results for highly varying and random coefficients by increasing $N_d=N^3$ and a fixed $H/h=12$ are presented in Table~\ref{3d-random-Nd}. We can see that for methods 1 and 2, the number of adaptive constraints becomes problematic as $N$ increases, about 80 constraints per face in method1 and about 55 constraints in method2 with the total number $H/h=12$, i.e. 121 interior nodes per face.
In methods 3 and 4, about less than 4 adaptive constraints are chosen per face. The edge constraints which has been chosen in the method3 is a little bit less than methods 1 and 2.
\revHH{By using e-version of method3, we can reduce the computational cost with better conditioner numbers.
To reduce the adaptive primal constraints, larger values of $\lambda_{TOL}$ can be used for the e-version and
the results present good condition numbers and iteration counts.}
In method4, we again observe numerical instability and considerable cost for projection as increasing $N$.
Efficient implementation
of projection operator $P$ should be addressed in elsewhere; see also discussions in \cite{Klawonn-TR}.
\revHH{In conclusion, method3 with the economic version can provide a scalable and robust coarse problem again for the test models even increasing $N$ with highly random coefficients.}

\begin{table}[thp!]
\footnotesize \caption{Performance of the methods 1 to 4 with $\lambda^F_{TOL}=1+\log(H/h)$, and  $\lambda^E_{TOL}=1000$, and e-method3(L) (with $\lambda_{TOL}^F=10$, $\lambda_{TOL}^E=10^4$) for
highly varying and random $\rho(x)$ in $(10^{-3},10^3)$ by increasing $N_d=N^3$
and a fixed $H/h=12$:
$N$ (number of subdomains in one direction),
Iter (number of iterations), $\kappa$ (condition numbers), time (time spent
on PCG solver and eigenvalue problems),
pnum1 (number of first type of primal unknowns on faces in method1), and
pnum2 (number of second type of primal unknowns on faces in method1, or number of primal unknowns
from the GEIG problem on faces in methods 2 to 4), pnumE (number of primal unknowns on edges). $p1=\frac{pnum1}{M_F}$, $p2=\frac{pnum2}{M_F}$, $pE=\frac{pnumE}{M_E}$, $M_F, M_E$ are the numbers of faces and edges, respectively.
}
\label{3d-random-Nd} \centering
\begin{tabular}{|c||c|c|c|c|c|c|c|c|c|c|} \hline
$N$ & method &  Iter & $\kappa$ & time & pnum1 & pnum2 & pnumE & $p1$& $p2$& $pE$
\rule{0pt}{9pt}
\\  \hline
2   & method1   & 65 & 173.26  & 20.56 & 949 & 44 &20 &79.08&3.67&3.33\\
    & method2   & 69 & 178.95  & 18.58 &     & 640&20 & &53.33&3.33\\
    & method3   & 16 & 4.11    & 1.01e+3&     & 46 &13 &&3.83&2.17\\
    & e-method3 &10 &1.80 &65.36 &     & 242&45 & &20.17 &7.50    \\
    & e-method3(L) &14&3.15&63.67& &157&22& &13.08 &3.67\\
    & method4   & 18 & 4.12   & 324.91&     & 46 &17 &&3.83&2.83\\
   \hline
3   & method1   & 135 & 2.39e+2 & 711.86  & 4148 & 198 &137&76.80&3.70&3.80 \\
    & method2   & 140 & 2.39e+2 & 309.83  &      & 2949&137&&54.60& 3.80\\
    & method3   & 20  & 5.56    & 5.88e+3 &      & 190 &84&&3.52& 2.33\\
    & e-method3 &12 &3.16 &196.82 &   &1068 &288 & &19.80 &8.00    \\
    & e-method3(L) &18&4.62&295.53& &725&175& &13.43&4.90\\
    & method4   & 28 & 12.91 & 3.31e+3 &      & 190 &115&&3.52& 3.19\\
   \hline
4   & method1   & 177 & 2.94e+2 & 1.45e+3 & 11165 & 538  &420&78.00&4.00&4.00\\
    & method2   & 187 & 2.95e+2 & 3.27e+3 &       &7764  &420&&53.90&4.00\\
    & method3   & 24  & 8.60    & 3.40e+3 &       & 533  &225&&3.70&2.08\\
    & e-method3 &13 &2.44 &800.36&     & 2835 &819 & &19.70 &7.60    \\
    & e-method3(L) &16&3.36&802.47& &1876&459& &13.00&4.30\\
    & method4   & $>$ 1000 &       &         &       &   & &&&\\
   \hline
\end{tabular}
\end{table}

\section{Conclusion}\label{sec:con}

In this paper, we develop adaptive coarse spaces for the BDDC and FETI-DP
algorithms for second order elliptic problems discretized by
the standard conforming finite elements.
The coarse components are obtained by solving local generalized eigenvalue problems
for edges (in 2D), and faces and edges (in 3D).
We also consider the use of both multiplicity scalings and deluxe scalings,
as well as the change of basis formulation and the projection formulation.
\revHH{To reduce the cost for forming generalized eigenvalue problems,
an economic version is also considered and tested for 3D examples.}
We show that the condition numbers of the preconditioned systems
are controlled by a given tolerance,
which is used to select coarse basis functions
from the generalized eigenvalue problems.
Numerical results are presented to verify the robustness of the proposed approaches.

\section*{Acknowledgement}
The first author would like to thank to Stefano Zampini for the help
with implementing the deluxe scaling in the change of basis formulation.

\bibliography{adaptive-ref}
\bibliographystyle{plain}

\end{document}